\documentclass[12pt]{amsart}
\usepackage{mathrsfs}
\usepackage{amssymb}
\usepackage{amsbsy}
\usepackage{amsmath}
\usepackage{graphicx}
\usepackage{caption}
\usepackage{subfig}
\usepackage[colorlinks,linkcolor=blue,citecolor=blue,hyperfootnotes=false,pagebackref]{hyperref}
\newcommand{\R}{\mathbb R}
\newcommand{\dd}{\,d}
\newcommand{\supp}{\operatorname{supp}}
\renewcommand{\Cap}{\operatorname{Cap}}

\newtheorem{theorem}{Theorem}[section]
\newtheorem{lemma}[theorem]{Lemma}

\theoremstyle{definition}

\theoremstyle{remark}

\allowdisplaybreaks

\numberwithin{equation}{section}

\begin{document}
	\begin{center}
		
		{\Large \bf Sharp Riesz capacity inequalities and the P\'olya--Szeg\H{o} conjecture}
		\vskip 15pt
		\begin{center}
			{{\bf    Mengwei~~Su$^1$ \quad Qiang~~Sun$^2$ \quad Ge Xiong$^1$}\\~~ \\ \small{$^1$ School of Mathematical Sciences, Key Laboratory of Intelligent Computing and Applications (Tongji University), Ministry of Education, Tongji University, Shanghai, 200092, PR China\\
					$^2$ Shanghai Center for Mathematical Sciences, Fudan University, Shanghai, 200438, PR China}}
		\end{center}
	\end{center}
	
	\vskip 3pt

	\footnotetext{Emails: smw@tongji.edu.cn; sunqiang@fudan.edu.cn; xiongge@tongji.edu.cn.}
	\footnotetext{Research of the second author is supported by China National Postdoctoral Program for Innovative Talents No. BX20250071.}
	
	\begin{center}
		\begin{minipage}{15cm}
			{\bf Abstract} 
			For $n\geq2$ and $p\in(n-2,n)$, we prove that closed balls maximize
			the ratio of Riesz $p$-capacity to Newtonian capacity among all compact
			subsets of $\mathbb{R}^n$.
			This resolves the classical P\'olya--Szeg\H{o} capacity ratio conjecture
			and its higher-dimensional extensions.
			For $n\geq4$,  a sharp inequality for the ratio of Newtonian
			capacity to  logarithmic capacity is established.
			All equality cases are characterized.

			\vskip 5pt{{\bf 2020 MSC:}  31B15, 31B05, 31A15}
			
			\vskip 5pt{{\bf Keywords:} P\'olya--Szeg\H{o} conjecture, Riesz capacity, logarithmic capacity}
			
		\end{minipage}
	\end{center}
	
	
	\vskip 25pt
	\section{\bf Introduction}
	\vskip 5pt
	
	
	In 1945, P\'olya and Szeg\H{o}
	\cite[Conjecture~(1.3)]{polya-szego} conjectured that for every nonempty
compact set $K\subseteq\R^2$,
	\begin{equation}\label{PSI}
		\Cap_1(K)\leq\frac{2}{\pi}\Cap_0(K),
	\end{equation}
where $\Cap_1$ is the
Newtonian capacity (i.e., electrostatic capacity) when the planar set $K$ is viewed as a subset
of $\R^3$, and $\Cap_0$ is the logarithmic capacity.
	The constant $\frac{2}{\pi}$ is sharp, since a closed disk $B$ of radius $r>0$
	satisfies $\Cap_0(B)=r$ and $\Cap_1(B)=\frac{2r}{\pi}$.
	
	Earlier, in 1931, P\'olya and Szeg\H{o} \cite[p.~19]{polya-szego-1931}
	showed that $\Cap_1(K)\leq\Cap_0(K)$ for  compact
	sets in $\mathbb{R}^2$.
    Nearly two decades later, using conformal mapping and Dirichlet energy, Szeg\H{o} \cite{szego-1952,szego-1956}  proved that every
    connected compact set in $\mathbb{R}^2$ satisfies
    \[
    \Cap_1(K)\leq1.07\,\frac{2}{\pi}\Cap_0(K).
	\]
	
  Let $n\ge 2$ and $K$ be a nonempty compact set in $\mathbb{R}^n$. Write $\mathcal P(K)$ for the set of Borel probability measures supported on $K$. 
	For $p\in(0,n)$, the \emph{Riesz $p$-capacity} of $K$ is
	\begin{align*}
		\Cap_p(K)
		=
		\big(
		\inf_{\mu\in\mathcal P(K)}
		\int_{K}\int_{K}{|x-y|^{-p}} \dd\mu(x)\dd\mu(y)
		\big)^{-\frac{1}{p}},
	\end{align*}
	and the \emph{logarithmic capacity} of $K$ is 
	\begin{align*}
		\Cap_0(K)
		=
		\exp\big(
		-\inf_{\mu\in\mathcal P(K)}
		\int_K\int_K
		\log\frac{1}{|x-y|}\,\dd\mu(x)\dd\mu(y)
		\big).
	\end{align*}

    Let $K\subseteq\R^n$ be a Borel set  and $p\in[0,n)$. It is said that
	$\Cap_p(K)=0$, if
	$\Cap_p(L)=0$ for every nonempty compact set $L\subseteq K$. For more details on Riesz $p$-capacity, refer to the excellent books \cite{landkof} by Landkof and \cite{bhs} by Borodachov, Hardin and Saff.

		Let $n\geq3$. 
The power $n-2$ of the Riesz $(n-2)$-capacity, up to a constant, is precisely the classical
\emph{Newtonian capacity} (also called \emph{electrostatic  capacity}), which is expressed as 
	\[
	n(n-2)\omega_n\Cap_{n-2}(K)^{n-2}=
	\inf\{
	\int_{\R^n}|\nabla\varphi|^2\,dx:
	\varphi\in C_c^\infty(\R^n),\
	\varphi\geq1\text{ on }K
	\},
	\]
	where $\omega_n=\frac{\pi^\frac{n}{2}}{\Gamma (\frac{n}{2}+1)}$  is the volume of the standard Euclidean
	unit ball in $\mathbb{R}^n$.
Newtonian capacity gives the least energy needed to hold \(K\) at unit potential, with zero-capacity sets negligible in Sobolev and elliptic potential theory. 

It is interesting that 
	for convex bodies, the variational structure of electrostatic  capacity  has given rise to a fruitful Brunn--Minkowski theory: In 1983,
	Borell \cite{borell} established the Brunn--Minkowski inequality for electrostatic capacity; In 1996,
	Jerison \cite{jerison} proved the existence and regularity of solutions to the Minkowski problem for  electrostatic capacity; In the same year, Caffarelli, Jerison and Lieb \cite{caffarelli-jerison-lieb} characterized the equality cases in Borell's inequality, and therefore obtained the uniqueness;
	In 2020, the third author of this article and Zou \cite{zou-xiong-jdg} proved the existence and uniqueness of solutions to the $L_p$ Minkowski problem for electrostatic $\mathfrak p$-capacity with $p>1$ and $1<\mathfrak p<n$.
	Please refer to,  e.g., \cite{akman-et-al,akman-lewis-saari-vogel,colesanti-variational,colesanti-cuoghi,colesanti-et-al,colesanti-salani,lu-xiong,xiong-xiong,xiong-xiong-xu,zou-xiong-cag} for
	 further developments.

	\vskip3pt
	
For compact sets with prescribed volume, Watanabe \cite{watanabe}
proved that balls minimize Riesz $p$-capacity $\Cap_p$ for $p\in(n-2,n)$.
See  Betsakos \cite{betsakos,betsakos-addendum} and
M\'endez-Hern\'andez \cite{mendez-hernandez} for different proofs.
For planar polygons, Solynin and Zalgaller
\cite{solynin-zalgaller} proved that the regular polygon minimizes
logarithmic capacity if  the area and number of sides
are fixed.

Very recently,  Clark and Laugesen \cite[Conjecture~1.1]{toward} posed the following conjecture, which naturally extends
\eqref{PSI} to all dimensions:
Let $n\geq2$. Among all compact $K\subseteq\R^n$ with
$\Cap_{n-2}(K)>0$, the closed $n$-dimensional balls maximize the ratio
\begin{equation}\label{eq:newtonian-ratio-conjecture}
	\frac{\Cap_{n-1}(K)}{\Cap_{n-2}(K)}.
\end{equation}
They showed that this conjecture  follows from the transverse energy conjecture
\cite[Conjecture~1.2 and Theorem~1.3]{toward}.

	\vskip3pt

	In this paper, we resolve the P\'olya--Szeg\H{o} capacity ratio conjecture \eqref{PSI} and the Clark-Laugesen conjecture
	\eqref{eq:newtonian-ratio-conjecture} for all $n\geq2$.
	More generally,  for every
	$p\in(n-2,n)$ and  compact $K\subseteq\R^n$ with
	$\Cap_{n-2}(K)>0$, we obtain the sharp upper bound
	of $$\frac{\Cap_p(K)}{\Cap_{n-2}(K)}$$ 
	and characterize the equality condition (Theorem \ref{thm:main}).
	We also obtain a sharp inequality for the ratio of Newtonian
	capacity to  logarithmic capacity when $n\geq4$ (Theorem~\ref{thm:newton-log}).

	For brevity, we fix the following notations. Let $n\geq2$ and $p\in(n-2,n)$. Write
	\begin{align*}
		c_{n,p}=\big(\frac{\Gamma(\frac{n}{2})}{\Gamma(1+\frac{p}{2})\Gamma(\frac{n-p}{2})}\big)^{\frac{1}{p}}\quad \text{and} \quad c_n=\frac{1}{2}\exp\!\big(\frac{\Gamma'(n-1)}{2\Gamma(n-1)}-\frac{\Gamma'(\frac{n-1}{2})}{2\Gamma(\frac{n-1}{2})}\big),
	\end{align*}
	where $\Gamma(x)=\int_0^\infty t^{x-1}e^{-t}\,dt$, and $
	\Gamma'(x)=\int_0^\infty t^{x-1}e^{-t}\log t\,dt.$

	\begin{theorem}\label{thm:main}
		If $n\geq2$, $p\in(n-2,n)$ and $K$ is a nonempty compact set in $\mathbb{R}^n$, then 
		\begin{equation*}\label{eq:main}
			\Cap_p(K)
			\leq
			c_{n,p}\Cap_{n-2}(K)
		\end{equation*}
		with equality if and only if $\Cap_{n-2}(K\setminus B)=0$ for some closed ball $B\subseteq K$.
	\end{theorem}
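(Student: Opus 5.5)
The plan is to prove the equivalent dual statement: for every $\mu\in\mathcal P(K)$,
\[
I_p(\mu):=\int\!\!\int |x-y|^{-p}\,d\mu(x)\,d\mu(y)\;\geq\; c_{n,p}^{-p}\,\Cap_{n-2}(K)^{-p}.
\]
The approach rests on two facts. First, for $p\in(n-2,n)$ the kernel $|x|^{-p}$ is strictly subharmonic away from the origin, since $\Delta|x|^{-p}=p(p-n+2)|x|^{-p-2}>0$. Second, it can be written as a superposition of Newtonian potentials of spheres. Writing $\sigma_t$ for the normalized surface measure on $\partial B(0,t)$, one has $U_{n-2}^{\sigma_t}(x)=\min(|x|,t)^{2-n}$.

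\emph{Step 1: subordination.} I would first find a nonnegative weight $w_p$ on $(0,\infty)$ with
\[
r^{-p}=\int_0^\infty w_p(t)\,\min(r,t)^{2-n}\,dt\qquad(r>0).
\]
Differentiating in $r$ gives $w_p(t)\propto t^{\,1-p}\cdot t^{\,n-3}$ up to constants, and this is admissible exactly because $p>n-2$. It yields
\[
I_p(\mu)=\int_0^\infty w_p(t)\,I_{n-2}(\mu,\mu*\sigma_t)\,dt .
\]

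\emph{Step 2: lower bound in terms of the Newtonian data of $K$.} Let $\nu$ be the Newtonian equilibrium measure of $K$, and let $u=\Cap_{n-2}(K)^{n-2}U_{n-2}^{\nu}$ be the capacitary potential. Then $u=1$ q.e.\ on $K$, $u$ is harmonic off $K$, and each superlevel set $\{u\ge s\}$ has Newtonian capacity $\Cap_{n-2}(K)\,s^{-1/(n-2)}$. I would bound each mixed energy $I_{n-2}(\mu,\mu*\sigma_t)$ from below by an expression depending only on $\Cap_{n-2}(K)$ and $t$. The mechanism is that $\mu*\sigma_t$ is a probability measure living on the $t$-neighbourhood of $K$, and the Newtonian capacity of that neighbourhood is controlled through the superharmonic function $u$ and the mean-value inequality over spheres. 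The inequality I would aim for is
\[
I_{n-2}(\mu,\mu*\sigma_t)\;\geq\;\phi\big(t/\Cap_{n-2}(K)\big),
\]
where $\phi(s)=\min(1,s)^{2-n}$-type expression realized exactly when $K$ is a ball of radius $\Cap_{n-2}(K)$ and $\mu$ is its Riesz $p$-equilibrium measure, whose density is proportional to $(R^2-|x|^2)^{(p-n)/2}$. Integrating against $w_p$ and evaluating the ball case with Beta-function integrals should produce exactly $c_{n,p}^{-p}$.

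\emph{Main obstacle.} I expect Step 2 to be the crux. The pointwise comparison need not hold for each fixed $t$; it may only hold after integration in $t$. The first thing I would try is to pass to the dual formulation
\[
\Cap_p(K)^p=\sup\{\mu(K):U_p^\mu\le1\text{ on }K\}.
\]
I would then test $U_p^\mu\le1$ against $\nu$, which gives $\mu(K)\,\Cap_{n-2}(K)^{2-n}\le\int U_p^\nu\,d\mu$. Next I would bound $U_p^\nu$ from above using subharmonicity of the kernel: since $\nu$ is carried by $\partial K$, its Riesz potential is dominated by that of the ball's Newtonian equilibrium measure, via a symmetrization or comparison along the level sets of $u$. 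This reduces the problem to an explicit computation on the ball.

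\emph{Equality.} All inequalities used are strict unless the level sets of $u$ outside $K$ are concentric spheres. That forces $u$ to coincide with the capacitary potential of a ball $B$. Since $u=1$ q.e.\ on $K$ and $u<1$ outside $B$, it follows that $B\subseteq K$ up to a polar set and $\Cap_{n-2}(K\setminus B)=0$. Conversely, adding a set of Newtonian capacity zero changes neither capacity, because $\Cap_{n-2}$-null sets are $\Cap_p$-null for $p>n-2$.
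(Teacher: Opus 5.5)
\paragraph{Step 1 fails for structural reasons, not computational ones.}
The Newtonian potential of $\sigma_t$ is $\max(|x|,t)^{2-n}=\min(|x|^{2-n},t^{2-n})$, not $\min(|x|,t)^{2-n}$. Each such function is superharmonic on $\R^n$, so any superposition with nonnegative weights is superharmonic. But you note yourself that $\Delta|x|^{-p}>0$ off the origin when $p>n-2$. Concretely, differentiating $r^{-p}=\int_0^\infty w_p(t)\max(r,t)^{2-n}\,dt$ in $r$ gives $\int_0^r w_p(t)\,dt=\frac{p}{n-2}\,r^{\,n-2-p}$. This is \emph{decreasing} in $r$ precisely because $p>n-2$. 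A nonnegative weight exists only in the opposite regime $p<n-2$, and your ``admissible exactly because $p>n-2$'' is backwards.

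Consequently $I_p(\mu)$ is not a positive average of the mixed energies $I_{n-2}(\mu,\mu*\sigma_t)$. Step 2 therefore has nothing to feed into, and its target inequality is never actually specified (``$\phi$ of $\min(1,s)^{2-n}$-type''). Your equality discussion, that the level sets of $u$ must be concentric spheres, rests on these unestablished inequalities. Finally, the objects you use ($u=\Cap_{n-2}(K)^{n-2}U^\nu_{n-2}$, the level-set capacity formula $\Cap_{n-2}(K)s^{-1/(n-2)}$, the sphere subordination) make no sense as written for $n=2$, which is the original P\'olya--Szeg\H{o} case.

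\paragraph{The dual fallback does not close either.}
Testing $U_p^\mu\le1$ against the probability measure $\nu$ gives only the \emph{upper} bound $\int U_p^\nu\,d\mu\le1$. Your displayed inequality $\mu(K)\Cap_{n-2}(K)^{2-n}\le\int U_p^\nu\,d\mu$ does not follow, and it is not even scale invariant since $p\neq n-2$. To bound $\mu(K)$ you need a \emph{lower} bound on $U_p^\nu$ on $K$, not an upper one.

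The natural lower bound is Jensen's inequality, $U_p^\nu\ge (U_{n-2}^\nu)^{p/(n-2)}=\Cap_{n-2}(K)^{-p}$ q.e.\ on $K$. It yields only $\Cap_p(K)\le\Cap_{n-2}(K)$, the non-sharp constant $1$ of the 1931 P\'olya--Szeg\H{o} type. This loss is intrinsic to the method:
\begin{itemize}
\item For $K=B_o(R)$, $U_p^\nu$ is radial and subharmonic in the open ball, so $\min_B U_p^\nu=U_p^\nu(o)=R^{-p}$.
\item Sharpness would require $U_p^\nu\ge(c_{n,p}R)^{-p}>R^{-p}$, since $c_{n,p}<1$.
\item Taking $\mu$ to be the $p$-capacitary measure of a tiny concentric ball shows that even an averaged version of the needed bound fails.
\end{itemize}

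\paragraph{The missing idea is a genuinely nonlinear device.}
The paper proceeds as follows.
\begin{enumerate}
\item It applies the function $F$, built from the inverse $\phi$ of the normalized incomplete beta function, to the $p$-capacitary potential $U=U_p^{\eta_p}$.
\item It proves $\Delta(F\circ U)\le0$ off $\supp\eta_p$. The tools are an inversion $y\mapsto (y-x)/|y-x|^2$ and the sharp moment inequality $|\int x\,d\sigma|^2\le g(m)\int|x|^2\,d\sigma$ for measures with $U_p^\sigma\le|x|^{-p}$.
\item It adds the Newtonian equilibrium potential (logarithmic if $n=2$). This gives a superharmonic $W$ on the unbounded component of $\R^n\setminus K$ with nonnegative boundary values q.e., so the minimum principle yields $W\ge0$.
\item The inequality is read off from the asymptotics of $W$ at infinity: the coefficient of $|x|^{2-n}$ for $n\ge3$, and the constant term $\log\bigl(c_{2,p}\Cap_0(K)/\Cap_p(K)\bigr)$ for $n=2$.
\item In the equality case, $W\equiv0$, hence $\Delta(F\circ U)=0$. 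The equality case of the moment lemma then identifies $\eta_p$ as the capacitary measure of a ball.
\end{enumerate}
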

	
	\begin{theorem}\label{thm:newton-log}
		If $n\geq4$ and $K$ is a nonempty compact set in $\mathbb{R}^n$, then
		\begin{equation*}\label{eq:newton-log}
			\Cap_{n-2}(K)\leq c_n\Cap_0(K)
		\end{equation*}
		with equality if and only if $\Cap_0(K\setminus B)=0$ and $\partial B\subseteq K$ for some closed ball $B$.
	\end{theorem}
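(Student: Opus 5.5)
Because $\Cap_0$ is an infimum of the logarithmic energy over $\mathcal P(K)$, the theorem amounts to a sharp upper bound on the logarithmic energy of one well-chosen measure. I would test with the Newtonian equilibrium measure. Let $\mu\in\mathcal P(K)$ be the minimizer for $\Cap_{n-2}(K)$ and put $V=\Cap_{n-2}(K)^{-(n-2)}$. By Frostman's theorem and the maximum principle for the Newtonian kernel, $U_\mu(z)=\int|z-y|^{2-n}\dd\mu(y)\le V$ for all $z\in\R^n$, with equality quasi-everywhere on $K$. Since $\Cap_0(K)\ge\exp(-I_{\log}(\mu))$, it suffices to prove
\[
I_{\log}(\mu)=\int\!\!\int\log\frac{1}{|x-y|}\dd\mu(x)\dd\mu(y)\le\frac{1}{n-2}\log V+\log c_n .
\]
Equality should hold when $K=\partial B$ and $\mu$ is normalized surface measure; I would use this case to calibrate every inequality below.

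\textbf{Spherical averages.} The main tool is the mean value identity for the Newtonian kernel: for $x\in\R^n$ and $h>0$, the average of $|z-y|^{2-n}$ over $z\in\partial B(x,h)$ equals $\min(|x-y|,h)^{2-n}$. Averaging $U_\mu\le V$ over that sphere gives, for every $x$ and $h$,
\[
\int\min(|x-y|,h)^{2-n}\dd\mu(y)\le V .
\]
Next I would represent the logarithmic kernel, modulo a normalization, as a superposition of the truncated kernels:
\[
\log\frac1r=\int_0^\infty w(h)\big(\min(r,h)^{2-n}-\phi(h)\big)\dd h+\text{const},
\]
where $w$ and $\phi$ are chosen so that the integral converges. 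Writing $k_h(r)=\min(r,h)^{2-n}$, differentiating in $r$ gives $-1/r=-(n-2)r^{1-n}\int_r^\infty w(h)\dd h$, so $\int_r^\infty w=r^{n-2}/(n-2)$. That makes $w$ a negative multiple of $h^{n-3}$, which is the wrong sign for the inequality. So I would instead apply the averaging to a kernel with the right monotonicity: first treat $r^{-s}$ for small $s>0$, then differentiate at $s=0$ using $\log\frac1r=\lim_{s\to0}(r^{-s}-1)/s$. Alternatively I would work with the sharper two-sided information that $U_\mu=V$ on $\supp\mu$ quasi-everywhere, combined with superharmonicity of $y\mapsto\log\frac1{|x-y|}$ in $\R^n$, which holds for $n\ge3$ because $\Delta\log\frac1r=-(n-2)/r^2$.

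\textbf{Where $n\ge4$ should enter.} For $n\ge4$, the function $t\mapsto\log\frac1{|x-y|}$ composed with $t=|x-y|^{2-n}$ should be concave in the right range, so that a Jensen-type or balayage argument relative to $\mu$ applies. Concretely I would try to show
\[
\int\log\frac1{|x-y|}\dd\mu(y)\le\frac1{n-2}\log V+\log c_n\quad\text{for }\mu\text{-a.e. }x.
\]
The proof would compare the distribution of $|x-y|$ under $\mu$ with its distribution under surface measure on the sphere through $x$ that carries the same Newtonian data. This is the extremal configuration, and $c_n$, which is built from $\Gamma'/\Gamma$ at $n-1$ and $\frac{n-1}2$, is exactly the logarithmic energy of uniform measure on $\partial B$ relative to the Newtonian normalization. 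The comparison would be a rearrangement in the distance variable, using the family of constraints $\int\min(|x-y|,h)^{2-n}\dd\mu(y)\le V$ for all $h$. Integrating in $x$ against $\mu$ then gives the theorem.

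\textbf{Equality case.} For equality, every inequality must be tight. The averaged constraint must then be an equality for all $h$, which forces the distance distribution from $\mu$-almost every $x$ to be that of a sphere. From this I expect to conclude that $\mu$ is uniform on some $\partial B$, so $\partial B\subseteq K$, and that $K\setminus B$ is $\Cap_0$-null, since otherwise the logarithmic equilibrium measure of $K$ would strictly beat $\mu$. The converse direction is a direct computation.

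\textbf{Main obstacle.} The main obstacle is the sharp pointwise comparison, that is, producing a representation of $\log\frac1r$ whose weights have the correct sign. I expect this to be exactly where $n\ge4$ is needed, because for $n=3$ the required concavity fails. My first attempt would be the $s\to0$ limit of the corresponding sharp Riesz-$s$ bound for $0<s<n-2$, obtained by the same spherical-averaging method.
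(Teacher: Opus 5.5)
\textbf{Verdict: there is a genuine gap.} The reduction is right, but your method can only give the constant $1$, never $c_n$, and the pointwise bound you aim for is false.

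Your reduction agrees with the paper's. With $\mu$ the $(n-2)$-equilibrium measure, it suffices to prove $I_0(\mu)\le\frac1{n-2}\log V+\log c_n$. The paper normalizes $\Cap_{n-2}(K)=1$ and proves $I_0(\mu_{n-2})\le I_0(\nu)=\log c_n$.

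\textbf{The spherical-averaging route only gives the non-sharp constant.} First, the spherical mean is misstated. Write $A_h(x)$ for the average of $U^\mu_{n-2}$ over $\partial B_x(h)$; the kernel is $\max\{|x-y|,h\}^{2-n}$, not $\min\{|x-y|,h\}^{2-n}$. With $\min$, your constraint would force $h^{2-n}\le V$ for all $h>0$. With the correct kernel the weight is $h^{n-3}>0$:
\[
\log\frac1r=\int_0^\infty h^{n-3}\bigl(\max\{r,h\}^{2-n}-h^{2-n}\mathbf 1_{\{h>1\}}\bigr)\,\dd h-\frac1{n-2}.
\]
So there is no sign problem at all. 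However, inserting $A_h(x)\le\min\{V,h^{2-n}\}$ gives exactly $U_0^\mu(x)\le\frac1{n-2}\log V$.
\begin{itemize}
\item Your Jensen/concavity idea gives the same bound, $U_0^\mu(x)\le\frac1{n-2}\log U_{n-2}^\mu(x)$. It holds for every $n\ge3$, so it cannot be where $n\ge4$ enters.
\item Integrating yields only $\Cap_{n-2}(K)\le\Cap_0(K)$, i.e.\ constant $1$ instead of $c_n<1$ (e.g.\ $c_4=e^{-1/4}$).
\item This cannot be improved using only the law of $|x-y|$. The point mass at distance $V^{-1/(n-2)}$ satisfies all your constraints, even $U^\mu_{n-2}(x)=V$, and attains the bound.
\item For $r^{-s}$ the same computation gives $I_s(\mu)\le V^{s/(n-2)}$ rather than the sharp bound by $I_s(\nu)$. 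So your $s\to0^+$ variant fails for the same reason.
\end{itemize}

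\textbf{The pointwise target is false.} The claim ``for $\mu$-a.e.\ $x$'' fails in general. Take $K=\partial B_o(1)\cup\partial B_c(\epsilon)$ with $|c|=1+s$ and $0<\epsilon\ll s\ll1$. Linearize the equilibrium measure in $b=\epsilon^{n-2}(1-|c|^{2-n})$: the small sphere carries mass about $b$, and the unit sphere loses the balayage of $b\delta_c$. One finds that near $c/|c|$ on the unit sphere, $U_0^\mu$ exceeds $\frac1{n-2}\log V+\log c_n$ by about $b\kappa_n$. Here $\kappa_n>0$ is the mean of $\log|u|$ under the half-space Poisson density $\propto(1+|u|^2)^{-n/2}$ on $\R^{n-1}$. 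The $\mu$-average is below the bound, as the theorem requires, but only by $O(bs^2)$. So the inequality is genuinely global. Your equality argument, which rests on the pointwise step, is unsupported as well.

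\textbf{The missing idea is the paper's global comparison.} With $\Cap_{n-2}(K)=1$ and $U=U^\mu_{n-2}$, the isocapacitary inequality gives $|\{U>t\}|\le|\{U_{n-2}^{\nu}>t\}|$, i.e.\ $U^*\le U_{n-2}^\nu$ (Lemma~\ref{lem:distribution}). The energy is then written as a quadratic form in $U$, to which Riesz rearrangement applies.
\begin{itemize}
\item For $n\ge5$, the Riesz composition formula applied twice gives $I_p(\mu)=A_{n,p}\iint U(x)U(y)|x-y|^{-p-4}\,\dd x\,\dd y$ for $0<p<n-4$. Hence $I_p(\mu)\le I_p(\nu)$, and one lets $p\to0^+$ via Lemma~\ref{lem:riesz-log-limit}.
\item For $n=4$ that range of $p$ is empty. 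One instead uses $\int_{B_o(R)}U^2=\pi^2+2\pi^2\log R+2\pi^2I_0(\mu)+O(R^{-1})$ and compares with $U^*$ and $U_2^\nu$.
\end{itemize}
This is exactly where $n\ge4$ is needed. Equality then forces $U^*=U_{n-2}^\nu$, and the P\'olya--Szeg\H{o} equality case identifies $\mu$ as the uniform measure on a unit sphere.
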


If $n=2$ and $p=1$  in Theorem \ref{thm:main}, then $c_{2,1}=\frac{2}{\pi}$. Therefore, we resolve the classical P\'olya--Szeg\H{o} capacity ratio inequality \eqref{PSI}.
If $n\ge 2$ and $p=n-1$, we give an affirmative answer to  the Clark-Laugesen conjecture
\eqref{eq:newtonian-ratio-conjecture}. Since $\lim\limits_{p\to n^-}\Cap_p(K)/c_{n,p}=(|K|/\omega_n)^\frac{1}{n}$ (see \cite[Corollary~1.2]{continuity}),
letting $p\to n^-$ in Theorem~\ref{thm:main}, we obtain
the classical isocapacitary inequality $|K|\leq \omega_n\Cap_{n-2}(K)^n$,
where $|K|$ is the $n$-dimensional volume of $K$.
	
	We sketch the idea underlying the proof of Theorem~\ref{thm:main}.
	First, we establish a sharp \emph{moment inequality} for a Borel measure $\sigma$ with its Riesz $p$-potential
	$U_p^\sigma(x)\leq |x|^{-p}$, and characterizes the equality condition (Lemma~\ref{lem:moment}).
Second, we define an auxiliary function $F$ using the inverse
normalized incomplete beta function.
It is remarkable that with this $F$,  we shows that
$F\circ U_p^\sigma$ is \emph{superharmonic} outside $\supp\sigma$ by the established moment
inequality (Lemma~\ref{lem:gradient}).
Finally, taking $\sigma$ to be the $p$-capacitary measure of $K$,
we construct a new function $W$ by adding $F\circ U_p^\sigma$ and the Riesz $(n-2)$-potential of $K$, in order that  $W$ is still \emph{superharmonic} on the unbounded component
of $\R^n\setminus K$ and 
\[|x|^{n-2}W(x)=	\begin{cases}
	\log\big(
\frac{c_{2,p}\Cap_0(K)}{\Cap_p(K)}
\big)
+
O(|x|^{-1}), & n=2,\\[6pt]
	\frac{
		\Cap_{n-2}(K)^{n-2}
		-
		c_{n,p}^{-(n-2)}\Cap_p(K)^{n-2}}{n-2}
	+
	O(|x|^{-1}),
	& n\geq3,
\end{cases}\quad \text{as } |x|\to \infty.
\]
By the \emph{generalized minimum principle} for superharmonic functions (Lemma \ref{lem:exterior-minimum}), we have $W\geq0$, which yields the desired inequality.

The proof of Theorem~\ref{thm:newton-log} depends on space dimensions.
For $n\geq5$, using the Riesz composition
formula (\ref{lem:riesz-composition}), the Riesz rearrangement inequality, and the rearrangement
inequality for Riesz $(n-2)$-potentials  (Lemma~\ref{lem:distribution}), we obtain the desired inequality.
For $n=4$,
we derive an asymptotic expansion
for $\int_{B_o(R)}U^2\,\dd x$ and prove that
   $\int_{B_o(R)}(U^*)^2\,\dd x-\int_{B_o(R)}U^2\,\dd x=O(R^{-1})$, where $U$ is the  Riesz $(n-2)$-potential of $K$ and $U^*$ is the symmetric decreasing rearrangement of $U$.
By cancelling the divergent terms and letting $R\to\infty$, we establish the desired inequality.

	This paper is organized as follows. In Section~\ref{sec:preliminaries}, we collect some basic facts on potential theory and prove several useful lemmas. Theorem~\ref{thm:main} and Theorem~\ref{thm:newton-log} are proved in Section~\ref{sec:moment-method} and Section~\ref{sec:first-proof}, respectively.
	
		\vskip15pt
	\section{\bf Preliminaries}\label{sec:preliminaries}
		\vskip5pt
	Write $o$ for the origin of $\mathbb{R}^n$ and $x\cdot y$ for the standard inner product of $x,y\in\mathbb{R}^n$.  Let $\mathbb{S}^{n-1}=\{x\in\mathbb{R}^n: |x|=1\}$ be the boundary of the Euclidean unit ball in $\mathbb{R}^n$. 
	 For $x\in\mathbb{R}^n$ and $r\geq0$, write $B_x(r)$ for the closed ball with center $x$ and
	radius $r$.

	Let $n\ge 2$ and $p\in[0,n)$. The \emph{Riesz $p$-kernel} is
	\[
	k_p(x)=
	\begin{cases}
		|x|^{-p}, & p>0,\\
		-\log|x|, & p=0,
	\end{cases}
	\]
	and the \emph{Riesz $p$-potential} of a finite Borel measure $\mu$ is $$ U_{p}^{\mu}(x)=\int_{\R^n} k_p(x-y)\,\dd\mu(y).$$
	For a nonempty compact set $K\subseteq\R^n$ and $\mu\in\mathcal P(K)$, define
	\[
	I_p(\mu)
	=
	\int_{\R^n}U_{p}^{\mu}(x) \dd\mu(x),\]
	and the \emph{Riesz $p$-energy} of $K$ is $
	V_p(K)
	=
	\inf_{\mu\in\mathcal P(K)}I_p(\mu).$
	A measure $\mu_p\in\mathcal P(K)$ attaining this infimum is called
	the \emph{$p$-equilibrium measure} of $K$. 
	Thus, the Riesz $p$-capacity of $K$ is
	\[
	\Cap_p(K)=
	\begin{cases}
		V_p(K)^{-\frac{1}{p}}, & p>0,\\
		e^{-V_0(K)}, & p=0.
	\end{cases}
	\]
	
	It is known that $\Cap_{p}(tK+z) = t\Cap_{p}(K)$ for $t>0$ and $z\in \mathbb{R}^n$.

	A property holds \emph{$p$-quasi-everywhere} if it holds outside
	some Borel set of zero $p$-capacity.
	
	Let $n\geq2$ and $p\in(n-2,n)$.  For any closed ball $B$ with positive
	radius, we have the following formula (See, e.g., \cite[Appendix~A]{continuity}) 
	\begin{equation}\label{eq:ball-capacity-ratio}
		c_{n,p}
		=\big(\frac{\Gamma(\frac{n}{2})}{\Gamma(1+\frac{p}{2})\Gamma(\frac{n-p}{2})}\big)^{\frac{1}{p}}=\frac{\Cap_p(B)}{\Cap_{n-2}(B)}.
	\end{equation}
Write $\beta$ for the {\emph{normalized incomplete beta function}}
\[
\beta(t)
=
\frac{
	\int_0^t
	u^{\frac{p}{2}-1}(1-u)^{\frac{n-p}{2}-1}\,\dd u
}{\int_0^1
	u^{\frac{p}{2}-1}(1-u)^{\frac{n-p}{2}-1}\,\dd u}, \quad t\in [0,1].
\]
Then $\beta$ is a strictly increasing bijection on $[0,1]$.
We denote $\phi$ as the inverse of $\beta$.
	
		A Borel set $K\subseteq\R^n$ is called \emph{polar} if there exist an open set
	$L\supseteq K$ and a superharmonic function $f$ on $L$
	such that $f=+\infty$ on $K$, and $f$ is not identically $+\infty$
	on any connected component of $L$.
	See \cite[Definition~4.2.1]{helms}.
	For $n\geq3$ and a Borel set $K\subseteq\R^n$,
	\begin{equation}\label{eq2.6}
		K\text{ is polar}
		\quad\Longleftrightarrow\quad
		\Cap_{n-2}(K)=0.
	\end{equation}
	Refer to \cite[Theorems~4.4.26 and~4.5.2]{helms}.
	
	The following lemma collects several standard facts from potential theory.
	Please refer to \cite[Chapter II]{landkof}, \cite[Section 4]{bhs} and \cite[Appendix A.3]{csw}. 
	
	\begin{lemma}\label{lem:variation-polar}
		If $K$ is a nonempty compact set in $\mathbb{R}^n$
		with $V_p(K)<\infty$ and  $p\in[0,n)$, then $K$ admits a unique $p$-equilibrium
		measure $\mu_p\in\mathcal P(K)$. Moreover, $U_{p}^{\mu_p}\geq V_p(K)$ $p$-quasi-everywhere on $K$ and $U_{p}^{\mu_p}=V_p(K)$ $\mu_p$-almost everywhere.
	\end{lemma}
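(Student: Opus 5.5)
The statement to prove is Lemma~\ref{lem:variation-polar}. The plan is the classical Frostman direct-method argument, carried out for a kernel $k_p$ with $p\in[0,n)$. We first reduce to a lower-semicontinuity and compactness statement. Since $K$ is compact, $\mathcal P(K)$ is weak-$*$ compact. The kernel $k_p$ is lower semicontinuous and bounded below on $K-K$: for $p=0$ we have $-\log|x-y|\geq -\log\operatorname{diam}K$. It follows, by approximating $k_p$ from below by continuous truncations $\min(k_p,M)$ and using monotone convergence, that $\mu\mapsto I_p(\mu)$ is weak-$*$ lower semicontinuous on $\mathcal P(K)$. Because $V_p(K)<\infty$, we take a minimizing sequence with bounded energies, extract a weakly convergent subsequence, and conclude that the limit $\mu_p$ attains the infimum. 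This gives existence.

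For uniqueness we use the positive definiteness of the Riesz kernel on signed measures of finite energy and total mass zero. For $0<p<n$ the Fourier transform of $|x|^{-p}$ is $c|\xi|^{p-n}$ with $c>0$. For $p=0$, $\hat{k}_0$ is again a positive multiple of $|\xi|^{-n}$ when tested against measures of total mass zero, and compact support makes the relevant integrals converge. Given two minimizers $\mu,\nu$, convexity of $I_p$ gives
\[
I_p\big(\tfrac{\mu+\nu}{2}\big)=\tfrac12 I_p(\mu)+\tfrac12 I_p(\nu)-\tfrac14 I_p(\mu-\nu).
\]
Minimality then forces $I_p(\mu-\nu)\le 0$. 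Positive definiteness gives $\mu=\nu$. We expect this step to be the main technical obstacle, since one must check that $I_p(\mu-\nu)$ is well defined and nonnegative for finite-energy measures. The log case is the most delicate, because there the kernel is not positive and only conditional positive definiteness is available.

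Next come the variational inequalities. Suppose the Borel set $E=\{x\in K: U_p^{\mu_p}(x)<V_p(K)-\varepsilon\}$ had positive $p$-capacity. Then it contains a compact $L$ with $V_p(L)<\infty$, and we let $\nu\in\mathcal P(L)$ have finite energy. The perturbation $(1-t)\mu_p+t\nu$ satisfies
\[
\frac{d}{dt}\Big|_{t=0^+}I_p=2\Big(\int U_p^{\mu_p}\,d\nu-I_p(\mu_p)\Big)<-2\varepsilon<0,
\]
which contradicts minimality. Taking a countable union over $\varepsilon=1/m$ shows that $U_p^{\mu_p}\ge V_p(K)$ holds $p$-quasi-everywhere on $K$. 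A set of zero capacity has $\mu_p$-measure zero, because $\mu_p$ has finite energy: if $\mu_p(L)>0$ for a compact $L$ of zero capacity, the restriction of $\mu_p$ to $L$, normalized, would have finite energy, a contradiction. Hence $U_p^{\mu_p}\ge V_p(K)$ also holds $\mu_p$-a.e.

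Finally, suppose $U_p^{\mu_p}>V_p(K)+\varepsilon$ on a set $A$ with $\mu_p(A)>0$. Integrating against $\mu_p$ gives $\int U_p^{\mu_p}\,d\mu_p>V_p(K)$, contradicting $I_p(\mu_p)=V_p(K)$. Alternatively, one can perturb by moving mass off $A$, with $t\mapsto\mu_p+t(\mu_p|_A/\mu_p(A)-\mu_p)$ for small negative $t$. This yields $U_p^{\mu_p}=V_p(K)$ $\mu_p$-almost everywhere. For $p=0$ the same computations apply verbatim, since all energies involved are finite and the kernel is bounded below on $K$.
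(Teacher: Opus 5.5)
Your proposal is correct and is essentially the argument the paper relies on: the paper gives no proof of this lemma and only cites Landkof (Chapter II), Borodachov--Hardin--Saff (Section 4) and the appendix of the CSW reference, where this Frostman-type scheme is carried out (weak-$*$ compactness plus lower semicontinuity of $I_p$ for existence, strict (conditional) positive definiteness of $k_p$ for uniqueness, and first-variation perturbations for the quasi-everywhere and $\mu_p$-a.e.\ statements). The one step you leave implicit is that the union over $\varepsilon=1/m$ of zero-capacity sets still has zero capacity. It follows from your own observation that finite-energy measures do not charge Borel sets of zero capacity, applied to the equilibrium measure of a compact subset of the union; alternatively, you can bypass it by perturbing directly on $\{x\in K: U_p^{\mu_p}(x)<V_p(K)\}$.
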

	
	Under the assumptions of Lemma~\ref{lem:variation-polar},
	if in addition $p\in(0,n)\cap[n-2,n)$, by the Frostman maximum
	principle \cite[Theorem~2.2]{zorii}, we have
	\begin{align}\label{Lem 2.1 cor}
		0<U_p^{\mu_p}\leq V_p(K) \text{ on } \mathbb{R}^n, \quad \text{ and } \quad U_{p}^{\mu_p}=V_p(K)
		\text{ $p$-quasi-everywhere on }K.
	\end{align}
	
	Let $\eta_p
	=
	\frac{\mu_p}{V_p(K)}
	=
	\Cap_p(K)^p\mu_p$ be  the \emph{$p$-capacitary measure} of $K$.
    Then, 
	\begin{align}\label{eq:equilibrium-voltage}
		0<U_{p}^{\eta_p}\leq1 \text{ on } \mathbb{R}^n, \quad \text{ and } \quad U_{p}^{\eta_p}=1 \text{ $p$-quasi-everywhere on } K.
	\end{align}
	
	The following lemma follows from
	\cite[Theorem~1.1, Remark~2.5, and Eq.~(A.10)]{csw}
	by rescaling the equilibrium measure so that its potential
	equals $1$ on the ball $B_o(s)$.
	\begin{lemma}\label{lem:ball}
		If $n\geq2$, $p\in(n-2,n)$, $s>0$ and $\eta_p$
		is the $p$-capacitary measure of $B_o(s)$, then $\eta_p(\R^n)=c_{n,p}^p s^p$, $	\dd\eta_p(y)
		=\pi^{-\frac{n}{2}-1}\sin(\frac{n-p}{2}\pi)	\Gamma(\frac{n}{2})
		(s^2-|y|^2)^{-\frac{n-p}{2}}
		\mathbf 1_{\{|y|<s\}}\,\dd y$
		and 
		\begin{equation*}
			U_{p}^{\eta_p}(x)
			=
			\begin{cases}
				1, & 0\leq |x|\leq s,\\
				\beta(\frac{s^2}{|x|^2}), & |x|>s.
			\end{cases}
		\end{equation*}
	\end{lemma}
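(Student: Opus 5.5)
The plan is to reduce to $s=1$ by scaling, identify the capacitary measure by exhibiting a measure whose potential is constant on the ball, and then read off the exterior potential and the total mass. Under $y\mapsto sy$ the kernel $k_p$ scales by $s^{-p}$. So if $\eta$ is the $p$-capacitary measure of $B_o(1)$, the push-forward of $s^{p}\eta$ is the $p$-capacitary measure of $B_o(s)$. This accounts for the factor $s^{-(n-p)}\cdot s^{n}\cdot s^{-p}\cdot s^{p}$ in the density, for the mass $c_{n,p}^p s^p$, and for the argument $s^2/|x|^2$ in $\beta$. It therefore suffices to treat $s=1$.

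\textbf{Step 1 (interior potential).} Set $d\nu(y)=A(1-|y|^2)^{-\frac{n-p}{2}}\mathbf 1_{\{|y|<1\}}\,dy$ with $A=\pi^{-\frac n2-1}\sin(\frac{n-p}{2}\pi)\Gamma(\frac n2)$. I will invoke M.~Riesz's classical identity, as recorded in \cite[Appendix~A]{csw} and \cite{landkof}: for $p\in(n-2,n)$, the potential $U_p^{\nu}$ is constant on $B_o(1)$. The constant is found by evaluating at $x=o$, using polar coordinates and a Beta integral:
\[
U_p^\nu(o)=A\,n\omega_n\int_0^1 r^{n-1-p}(1-r^2)^{-\frac{n-p}{2}}\,dr=\tfrac{A n\omega_n}{2}B\big(\tfrac{n-p}{2},1-\tfrac{n-p}{2}\big).
\]
The reflection formula $\Gamma(z)\Gamma(1-z)=\pi/\sin(\pi z)$ should turn this into $1$, which is exactly why the $\sin$ factor appears in $A$.

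\textbf{Step 2 (identification).} Since $U_p^\nu=1$ on $B_o(1)$ and $\nu$ is carried by $B_o(1)$, the normalized measure $\nu/\nu(\R^n)$ satisfies the equilibrium characterization. Its potential equals the constant $1/\nu(\R^n)$ everywhere on the support and on $K$. By the uniqueness in Lemma~\ref{lem:variation-polar}, together with the standard fact that a probability measure on $K$ with constant potential on $K$ is the equilibrium measure, we get $\nu/\nu(\R^n)=\mu_p$ and $V_p(B_o(1))=1/\nu(\R^n)$. Hence $\nu=\mu_p/V_p=\eta_p$.

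\textbf{Step 3 (exterior potential and mass).} For $|x|>1$ I will use the exterior formula from \cite[Theorem~1.1, (A.10)]{csw}. An alternative route is a Kelvin-type inversion $x\mapsto x/|x|^2$ applied to Riesz potentials. Either way one obtains
\[
U_p^\nu(x)=\frac{\int_0^{|x|^{-2}}u^{\frac p2-1}(1-u)^{\frac{n-p}{2}-1}du}{B(\frac p2,\frac{n-p}{2})}=\beta(|x|^{-2}).
\]
This exterior formula is the step I expect to be the main obstacle. A direct computation requires integrating the kernel against the singular density off the ball, and the inversion argument needs care because Riesz kernels with $p\neq n-2$ transform with weights. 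I would rely on the cited computation rather than redo it. The total mass then follows from the asymptotics. On one hand $|x|^pU_p^\nu(x)\to\nu(\R^n)$ as $|x|\to\infty$. On the other hand $\beta(t)\sim t^{p/2}/\big(\tfrac p2 B(\tfrac p2,\tfrac{n-p}{2})\big)$ as $t\to0$. This gives
\[
\nu(\R^n)=\frac{\Gamma(\frac n2)}{\Gamma(1+\frac p2)\Gamma(\frac{n-p}{2})}=c_{n,p}^p,
\]
which is consistent with \eqref{eq:ball-capacity-ratio}, since $\Cap_{n-2}(B_o(1))=1$.
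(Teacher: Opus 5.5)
Your proposal is correct and follows essentially the paper's route, which is simply to cite the M.~Riesz formulas in \cite[Theorem~1.1, Remark~2.5, Eq.~(A.10)]{csw} and rescale to $B_o(s)$. Your extra checks are accurate: the constant $A$ via the Beta integral and the reflection formula, and the mass $c_{n,p}^p$ via the $t\to0^+$ asymptotics of $\beta$. The only slip is cosmetic: the powers of $s$ in the rescaled density are $s^{-n}$ (Jacobian), $s^{n-p}$ (from $(1-|y|^2/s^2)^{-\frac{n-p}{2}}$) and $s^{p}$ (mass), whose product is $1$, not the product you listed. Your conclusion that no power of $s$ survives is still right.
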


	\begin{lemma}\label{lem:capacity-polar-removal}
		Let $n\geq2$, $p\in[0,n)$ and $K$ be a nonempty
		compact set in $\mathbb{R}^n$.
		If $L\subseteq K$ is a nonempty compact set with
		$\Cap_p(K\setminus  L)=0$, then $\Cap_p(K)=\Cap_p(L)$.
	\end{lemma}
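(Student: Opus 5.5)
We compare the two energies $V_p(K)$ and $V_p(L)$ directly; recall $\Cap_p$ is a monotone function of $V_p$, namely $V_p^{-1/p}$ for $p>0$ and $e^{-V_0}$ for $p=0$. So it suffices to show $V_p(K)=V_p(L)$.

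\textbf{Easy inequality.} Since $L\subseteq K$, we have $\mathcal P(L)\subseteq\mathcal P(K)$. Hence $V_p(K)\le V_p(L)$.

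\textbf{Reverse inequality, case $V_p(K)=\infty$.} Then $V_p(L)\ge V_p(K)=\infty$, so both energies are infinite and there is nothing to prove.

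\textbf{Reverse inequality, main case $V_p(K)<\infty$.} By Lemma~\ref{lem:variation-polar}, $K$ has a $p$-equilibrium measure $\mu_p$ with $I_p(\mu_p)<\infty$. The key claim is that $\mu_p(K\setminus L)=0$. Once this holds, $\mu_p\in\mathcal P(L)$, so $V_p(L)\le I_p(\mu_p)=V_p(K)$, and the reverse inequality follows.

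To prove $\mu_p(K\setminus L)=0$, argue by contradiction. The set $K\setminus L$ is Borel (relatively open in $K$). Suppose $\mu_p(K\setminus L)>0$. By inner regularity of the finite Borel measure $\mu_p$, there is a compact set $C\subseteq K\setminus L$ with $\mu_p(C)>0$. Let $\nu=\mu_p|_C/\mu_p(C)\in\mathcal P(C)$.

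We check that $\nu$ has finite energy. For $p>0$ the kernel is nonnegative, so
\[
I_p(\nu)\le \mu_p(C)^{-2}I_p(\mu_p)<\infty.
\]
For $p=0$ the kernel $-\log|x-y|$ is bounded below on the bounded set $K\times K$. Adding a constant to the kernel makes it nonnegative there, and the same comparison gives $I_0(\nu)<\infty$.

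Therefore $V_p(C)<\infty$. This means $\Cap_p(C)>0$ for $p>0$, and likewise $\Cap_0(C)=e^{-V_0(C)}>0$. But $C$ is a nonempty compact subset of $K\setminus L$, so the definition of $\Cap_p(K\setminus L)=0$ forces $\Cap_p(C)=0$, a contradiction.

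The only point needing care is this finite-energy transfer to the restricted measure, specifically the lower bound on the logarithmic kernel when $p=0$. The rest is immediate.
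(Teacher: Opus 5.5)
Your proof is correct, but it takes a more elementary route than the paper. The paper gives no direct argument: it invokes the corollary of Fuglede on p.~158 (with $X=K$, $A=L$, $N=K\setminus L$) to get $V_p(K)=V_p(L)$, using only that every compact subset of $K\setminus L$ has zero $p$-capacity.

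You instead prove the needed fact from scratch. A finite-energy measure on $K$ cannot charge the Borel set $K\setminus L$, because of inner regularity and because restricting to a compact $C\subseteq K\setminus L$ keeps the energy finite. For $p=0$ this last point uses that the logarithmic kernel is bounded below on $K\times K$, which you handle correctly by shifting it by a constant. This is the same mechanism behind the abstract result the paper cites, specialized to Riesz and logarithmic kernels.

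The citation buys brevity and places the lemma inside a general framework. Your version is self-contained and makes explicit the one place where $p=0$ needs care.

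You also do not need the equilibrium measure from Lemma~\ref{lem:variation-polar}. Your argument shows that every $\mu\in\mathcal P(K)$ with $I_p(\mu)<\infty$ already lies in $\mathcal P(L)$. Hence the infima defining $V_p(K)$ and $V_p(L)$ coincide directly, without any existence theory.
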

	
	\begin{proof}
		Since $\Cap_p(K\setminus L)=0$, it follows that  every nonempty compact
		subset of $K\setminus L$ has $p$-capacity zero. Applying
		\cite[p.~158, Corollary]{fuglede} with $X=K$, $A=L$ and $N=K\setminus L$, we have
	 $V_p(K)=V_p(L)$, and hence $\Cap_p(K)=\Cap_p(L)$.
	\end{proof}

	\begin{lemma}
		\label{lem:exterior-minimum}
		Let $n\geq2$, $K$ be a nonempty compact set in $\mathbb{R}^n$ and
		$\Omega_K$ be the unbounded component of $\R^n\setminus K$.
		Assume  $W$ is superharmonic and bounded below on $\Omega_K$,
		and
		\begin{equation*}
			\label{eq-lem2.4}\liminf_{x\in\Omega_K,\, x\to x_0}W(x)\geq0,
			\quad\text{for $(n-2)$-quasi-every}\ 
			x_0\in\partial\Omega_K.
		\end{equation*}

		\noindent (1) If $n=2$ and $\Cap_0(\partial\Omega_K)>0$, then
		$W\geq 0$ on $\Omega_K$. 
		
		\noindent (2) If $n\geq3$ and $\liminf\limits_{|x|\to\infty}W(x)\geq0$, then $W\geq 0$ on $\Omega_K$.
	\end{lemma}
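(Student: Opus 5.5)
We argue by contradiction and compare $W$ with a harmonic measure type function. Suppose $W(x_1)<0$ at some $x_1\in\Omega_K$. Since $W$ is bounded below, we may add a constant where convenient. The idea is to construct a nonnegative harmonic (or superharmonic) barrier $h$ on $\Omega_K$ that blows up on the exceptional polar set $E\subseteq\partial\Omega_K$ where the boundary condition may fail, and then apply the classical minimum principle on a bounded exhausting domain to $W+\varepsilon h$.

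\emph{Step 1: the polar set.} Let $E$ be the set of $x_0\in\partial\Omega_K$ where $\liminf W<0$. By hypothesis $E$ has zero $(n-2)$-capacity (for $n=2$ we read this as logarithmic capacity zero), hence $E$ is polar by \eqref{eq2.6} (and the analogous planar fact). By the Evans--type theorem (or directly by the definition of polar sets in \cite{helms}), there is a measure $\nu$ supported near $E$ whose potential is $+\infty$ on $E$. Concretely, I would take $h=U_{n-2}^{\nu}$ for $n\ge3$, which is positive, superharmonic on $\R^n$, tends to $0$ at infinity and equals $+\infty$ on $E$; for $n=2$ take $h=U_0^\nu+C$ restricted to a large disc $B_o(R)$, positive there. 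If a single Evans potential is not available for a non-closed $E$, I would cover $E$ by an open set of small capacity and use its capacitary potential; this technical point is where I expect the main care to be needed.

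\emph{Step 2: $n\ge3$.} For $\varepsilon>0,\delta>0$ consider $W_\varepsilon=W+\varepsilon h+\delta$ on $D_R=\Omega_K\cap\{|x|<R\}$. It is superharmonic, and its $\liminf$ at every boundary point is $\ge 0$: at $x_0\in\partial\Omega_K\setminus E$ because $W\ge0$ in the liminf and $h\ge0$; at $x_0\in E$ because $W$ is bounded below and $h\to\infty$ (lower semicontinuity of $h$); at $|x|=R$ for $R$ large since $\liminf_{|x|\to\infty}W\ge0$ gives $W>-\delta$ there. The classical minimum principle on the bounded open set $D_R$ gives $W_\varepsilon\ge0$. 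Let $R\to\infty$, then $\varepsilon\to0$ at points where $h<\infty$ (all but a polar set, hence dense), and $\delta\to0$; superharmonicity (lower semicontinuity plus the mean-value inequality, so $W(x)=\lim$ of averages) extends $W\ge0$ to all of $\Omega_K$.

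\emph{Step 3: $n=2$.} Here no condition at infinity is imposed, so we must use $\Cap_0(\partial\Omega_K)>0$. Then $\Omega_K\cup\{\infty\}$ is a domain in the Riemann sphere with nonpolar boundary, and a superharmonic function bounded below on $\Omega_K$ extends superharmonically across $\infty$ (isolated points are removable for functions bounded below, as a point is polar). Thus we apply the extended minimum principle on $\Omega_K\cup\{\infty\}$: with $g$ the Green function of $\Omega_K$ with pole at $\infty$ — which exists precisely because $\Cap_0(\partial\Omega_K)>0$ — the same argument as Step 2 with the barrier $h$ built from $E$ via Green potentials in $\Omega_K$ gives $W+\varepsilon h\ge0$, and letting $\varepsilon\to0$ concludes. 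Alternatively, one may invoke directly the extended minimum principle in \cite{helms} (boundary inequality off a polar set suffices for functions bounded below), which is the cleanest route in both cases.
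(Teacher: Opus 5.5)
Your argument is correct in outline, but it takes a different route from the paper. The paper proves nothing at this point and simply cites the generalized minimum principle: \cite[Theorem~2.6]{saff} for $n=2$, and for $n\geq3$ the fact that every open subset of $\R^n$ is Greenian \cite[Theorem~3.2.10]{helms}, combined with \eqref{eq2.6} and \cite[Theorem~4.2.21]{helms}. The ``cleanest route'' in your last sentence is exactly the paper's proof.

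Your Steps 1--2 instead reprove the cited theorem by the standard barrier argument: a nonnegative superharmonic $h$ equal to $+\infty$ on the exceptional set, the classical minimum principle on the bounded sets $D_R$, then $\varepsilon,\delta\to0$. This gives a self-contained proof that shows where each hypothesis enters: lower boundedness at points of $E$, and the condition at infinity on $\{|x|=R\}$. For $n\geq3$ it is complete once $h$ exists, and the technical point you flag is harmless. The map $x_0\mapsto\liminf_{x\to x_0,\,x\in\Omega_K}W(x)$ is lower semicontinuous on $\partial\Omega_K$, so $E$ is a countable union of compact sets $E_k$ of zero capacity. Then $h=\sum_k 2^{-k}U_{n-2}^{\kappa_k}$, with Evans measures $\kappa_k$ on $E_k$, will do; only $h\geq0$ is needed, not decay. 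In the final limit, what you need is that $\{h=+\infty\}$ is Lebesgue-null, which polar sets are. Density of its complement is not enough: lower semicontinuity cannot propagate $W\geq0$ from a dense set, whereas volume means can from a set of full measure.

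Step 3, however, is not yet a proof. The barrier is left unspecified, and the two natural candidates fail. A logarithmic Evans potential $U_0^{\kappa}$ of $E$ tends to $-\infty$ at infinity, so the Step 1 recipe on a large disc cannot work without a condition at $\infty$. Green potentials of measures carried by $\Omega_K$ have no reason to blow up on $E\subseteq\partial\Omega_K$.

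What works is to combine the two objects you name. Let $\mu_0$ be the $0$-equilibrium measure of $\partial\Omega_K$ (this is where $\Cap_0(\partial\Omega_K)>0$ enters), and set $g=V_0(\partial\Omega_K)-U_0^{\mu_0}$. By Frostman $g\geq0$; it is harmonic on $\Omega_K$, and $g(x)=\log|x|+O(1)$ as $|x|\to\infty$. With $\kappa$ a probability Evans measure for $E$, the function $h=U_0^{\kappa}+g$ has the properties you need:
\begin{itemize}
\item it is superharmonic and bounded below on $\Omega_K$;
\item it is bounded near $\infty$;
\item $\liminf_{x\to x_0}h(x)=+\infty$ for every $x_0\in E$.
\end{itemize}
Now $W+\varepsilon(h+C)+\varepsilon' g$ tends to $+\infty$ at infinity, so your Step 2 applies verbatim on $D_R$. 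Letting $\varepsilon'\to0$ and then $\varepsilon\to0$ gives the planar case, with no need to compactify at $\infty$.
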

	
	\begin{proof}
		If $n=2$, by \cite[Theorem 2.6]{saff}, it follows that $W\geq 0$ on $\Omega_K$.
		
		For $n\geq3$, by 	\cite[Theorem~3.2.10]{helms}, it follows that every open subset of $\R^n$ is Greenian.
		In particular, $\Omega_K$ is Greenian.
		Combining \eqref{eq2.6} with
		the generalized minimum principle
		\cite[Theorem~4.2.21]{helms}, it follows that $W\geq0$ on $\Omega_K$.
	\end{proof}
	\begin{lemma}\label{lem:riesz-log-limit}
		Let $\mu$ be a compactly supported probability measure on $\R^n$.
		If $I_s(\mu)<\infty$ for some $s\in(0,n)$, then
		 $I_p(\mu)<\infty$ for all $p\in[0,s]$ and $	\lim\limits_{p\to0^+}\frac{I_p(\mu)-1}{p}=I_0(\mu).$
	\end{lemma}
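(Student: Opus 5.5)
The plan is to expand the kernel in $p$ at $p=0$ and justify the interchange of limit and integration by dominated convergence. Since $\mu$ has compact support, its support has some diameter $D$, and $|x-y|\le D$ for $(x,y)\in\supp\mu\times\supp\mu$. For $t=|x-y|\in(0,D]$ and $p\in(0,s]$, write
\[
\frac{t^{-p}-1}{p}=\frac{e^{-p\log t}-1}{p}.
\]
By the mean value theorem this equals $-\log t\cdot e^{-\theta p\log t}$ for some $\theta\in(0,1)$. Hence it converges pointwise to $-\log t=k_0(x-y)$ as $p\to0^+$. The diagonal $x=y$ needs separate treatment. Since $I_s(\mu)<\infty$, $\mu$ has no atoms, so $\mu\otimes\mu$ gives the diagonal measure zero and we may ignore it.

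\textbf{Finiteness of $I_p(\mu)$.} We use the elementary bound $t^{-p}\le \max(1,D^{s})\,\max(t^{-s},1)$, valid for $p\in[0,s]$ and $t\le D$. The case $t\ge1$ is trivial because then $t^{-p}\le1$. The case $t<1$ follows from $t^{-p}\le t^{-s}$. This gives $I_p(\mu)\le C(1+I_s(\mu))<\infty$ for $p\in(0,s]$. For $p=0$, we have $|\log t|\le \log^+ D+ C_s t^{-s}$ on $(0,D]$, using $\log(1/t)\le t^{-s}/(es)$ for $t<1$. So $I_0(\mu)$ is finite as well, with $\log|x-y|$ bounded above on the support.

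\textbf{Domination of the difference quotients.} We need a bound on $\frac{t^{-p}-1}{p}$ that is integrable and uniform in small $p$. Restrict to $p\in(0,s/2]$. By convexity of $p\mapsto t^{-p}$ the quotient is monotone in $p$; more concretely, the mean value form gives $\left|\frac{t^{-p}-1}{p}\right|\le|\log t|\,\max(t^{-p},1)$. We split into two ranges.

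\begin{itemize}
\item For $t\ge1$ (so $t\le D$): the bound is at most $\log D$.
\item For $t<1$: the bound is $\log(1/t)\,t^{-s/2}\le C_s t^{-s}$, using $\log(1/t)\le \frac{2}{es}t^{-s/2}$.
\end{itemize}

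So the quotients are dominated by $\log^+D+C_s|x-y|^{-s}$, which is $\mu\otimes\mu$-integrable because $I_s(\mu)<\infty$ and $\mu$ is finite.

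\textbf{Conclusion.} Since $\mu$ is a probability measure,
\[
\frac{I_p(\mu)-1}{p}=\iint\frac{|x-y|^{-p}-1}{p}\,\dd\mu\,\dd\mu .
\]
Dominated convergence then gives the limit $\iint-\log|x-y|\,\dd\mu\,\dd\mu=I_0(\mu)$. The only delicate point is the uniform domination near the diagonal. It is handled by trading the logarithm for a power loss $t^{-s/2}$, which is exactly why $p$ is restricted to at most $s/2$.
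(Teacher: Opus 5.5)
Your proof is correct and follows the paper's route. Both arguments establish pointwise convergence of $\frac{|x-y|^{-p}-1}{p}$ to $-\log|x-y|$ off the $\mu\otimes\mu$-null diagonal, dominate it by $\log^+\operatorname{diam}(\supp\mu)+C|x-y|^{-s}$, and apply dominated convergence. The one difference is minor: the paper gets $\frac{t^{-p}-1}{p}\le\frac{t^{-s}}{s}$ for $t\le1$ and all $p\in(0,s]$ directly from the monotonicity in $p$ of the difference quotient, which you mention but then bypass, so your restriction to $p\le s/2$ via the mean value theorem is harmless but unnecessary.
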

	
	\begin{proof}
		Since $|z|^{-p} \leq 1 + |z|^{-s}$ for all $ z\in\mathbb{R}^n\setminus\{o\}$ and $I_{s}(\mu) <\infty$, we have for all $ p\in(0,s]$,
		\begin{align*}
			& I_p(\mu) = \int_{\mathbb{R}^n}\int_{\mathbb{R}^n}  |x-y|^{-p}d\mu(x)d\mu(y) \\
			 \leq & \int_{\mathbb{R}^n}\int_{\mathbb{R}^n} (1+|x-y|^{-s}) d\mu(x)d\mu(y) = 1+I_s(\mu)<\infty.
		\end{align*}
		
		Write $T=\max\{1,\operatorname{diam}(\supp\mu)\}$ and 
		\[
		f_p(x,y)=
		\begin{cases}
			\displaystyle\frac{|x-y|^{-p}-1}{p}, & x\neq y,\\[6pt]
			0, & x=y,
		\end{cases}
		\qquad \forall x,y\in\mathbb{R}^n.
		\]
		Then, $
		\lim_{p\to0^+}f_p(x,y)
		=-\log|x-y|$ for $x\neq y.$
		
		If $x,y \in \operatorname{\supp}\mu$ and $|x-y|\leq 1$, then $|f_{p}(x,y)| \leq \frac{|x-y|^{-s}}{s}$; If $x,y \in \operatorname{\supp}\mu$ and $|x-y|> 1$, then $|f_{p}(x,y)| \leq \frac{1-e^{-p\log T}}{p} \leq \log T$.
		So,
		\begin{align*}
			&|\frac{I_p(\mu)-1}{p}| \leq\int_{\mathbb{R}^n}\int_{\mathbb{R}^n}	|f_{p}(x,y)|\, d\mu(x)d\mu(y) \\
			 \leq &\int_{\mathbb{R}^n}\int_{\mathbb{R}^n} (\frac{|x-y|^{-s}}{s} + \log T)\, d\mu(x)d\mu(y) \leq \frac{I_{s}(\mu)}{s}+\log T < \infty.
		\end{align*}
		
		By the dominated convergence theorem, we have
		\begin{align*}
			&\lim_{p\to0^+}\frac{I_p(\mu)-1}{p}
			=
			\lim_{p\to0^+}
			\int_{\mathbb{R}^n}\int_{\mathbb{R}^n}	f_{p}(x,y)\, d\mu(x)d\mu(y) \\
			=&
			\int_{\mathbb{R}^n}\int_{\mathbb{R}^n}
			-\log|x-y|
			\, d\mu(x)d\mu(y)
			=I_0(\mu).
		\end{align*}
		Moreover, $I_0(\mu) \leq \frac{I_{s}(\mu)}{s}+\log T < \infty.$
	\end{proof}

	\section{\bf Proof of Theorem~\ref{thm:main}}\label{sec:moment-method}
	\vskip5pt
	Throughout this section, let $n\geq2$ and $p\in(n-2,n)$. 
	
	For brevity, write $g(x) = c_{n,p}^p \phi(x)^{\frac{p}{2}}
	\bigl(1-\phi(x)\bigr)^{\frac{n-p}{2}}$, $x\in[0,1]$, where $\phi$ is the inverse of the function $\beta$  introduced in
	Section~\ref{sec:preliminaries}. Then $g(x)=0$ if and only if $x=0$ or $x=1$.
	
	Write $\delta_o$ for the Dirac measure at the origin $o$.

	\begin{lemma}\label{lem:moment}
		Let $\sigma$ be a Borel measure on $\R^n$ with $m=\sigma(\mathbb{R}^n)<1$ and $\int_{\R^n} |x|^2\,\dd\sigma(x) < \infty$. 
		If $U_{p}^{\sigma}(x)\leq |x|^{-p}$ for $x\neq o$, then
		\begin{equation}\label{eq:moment}
			\big|\int_{\mathbb{R}^n} x\,\dd\sigma(x)\big|^2
			\leq
			g(m) \int_{\mathbb{R}^n} |x|^2\,\dd\sigma(x)
		\end{equation}
		with equality if and only if either $\sigma=m\delta_o$, or $v:=\int_{\R^n} x\,\dd\sigma(x)\neq o$, $\supp\sigma\subseteq B_{z}(r)$ and $U_p^\sigma(x)=|x|^{-p}$ for Lebesgue-a.e. $x\in B_{z}(r)$,
		where $z=g(m)^{-1}v$ and $r=g(m)^{-1}\phi (m)^\frac{1}{2}|v|$. 
	\end{lemma}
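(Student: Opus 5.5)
The plan is to turn the pointwise constraint $U_p^\sigma(x)\le |x|^{-p}$ into integral inequalities by testing it against the $p$-capacitary measures of balls from Lemma~\ref{lem:ball}, and then to extract the quadratic moment inequality \eqref{eq:moment} by optimizing over the ball. The form of the equality case suggests which balls to use. There the ball is $B_z(r)$ with $r^2=\phi(m)|z|^2$, so $o\notin B_z(r)$ and $U_p^{\eta}(o)=\beta(r^2/|z|^2)=m$ for the capacitary measure $\eta$ of $B_z(r)$.

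\textbf{Step 1 (duality).} Fix $z\neq o$ and $0<r<|z|$, and let $\eta=\eta_{z,r}$ be the $p$-capacitary measure of $B_z(r)$, which is absolutely continuous by Lemma~\ref{lem:ball}. Since $\sigma$ is a positive measure, Fubini/Tonelli and the hypothesis (used off the single point $o$, which is $\eta$-null) give
\begin{equation*}
\int U_p^{\eta}\,\dd\sigma=\int U_p^{\sigma}\,\dd\eta\le \int |x|^{-p}\,\dd\eta(x)=U_p^{\eta}(o)=\beta\big(\tfrac{r^2}{|z|^2}\big).
\end{equation*}
Here $U_p^\eta(x)=1$ on $B_z(r)$ and $U_p^\eta(x)=\beta(r^2/|x-z|^2)$ outside it.

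\textbf{Step 2 (quadratic minorant).} I will bound $U_p^{\eta}$ from below by a concave paraboloid $q(x)=\alpha-\lambda|x-w|^2$ with $\max q\le 1$ and $q\le U_p^\eta$ on all of $\R^n$. I take $w$ on the ray through $z$, and choose the parameters so that $q$ is tangent to the radial profile $\rho\mapsto\beta(r^2/\rho^2)$ at a suitable radius. Integrating $q$ against $\sigma$, which is legitimate because the second moment is finite, and combining with Step~1 gives
\begin{equation*}
\alpha m-\lambda\Big(\int|x|^2\dd\sigma-2w\cdot v+|w|^2m\Big)\le \beta\big(\tfrac{r^2}{|z|^2}\big).
\end{equation*}
This is an inequality involving only $m$, $v$ and the second moment $\int|x|^2\,\dd\sigma$.

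\textbf{Step 3 (optimization).} I will optimize over $z$, $r$, $w$, $\alpha$, $\lambda$, using the scaling $x\mapsto tx$, under which both the hypothesis and \eqref{eq:moment} are invariant. I expect the optimum to be at $\beta(r^2/|z|^2)=m$, that is $r^2=\phi(m)|z|^2$. Minimizing the resulting quadratic in $|z|$ should then produce exactly the factor $g(m)=c_{n,p}^p\phi(m)^{p/2}(1-\phi(m))^{(n-p)/2}$. The exponents $p/2$ and $(n-p)/2$ should come from $\beta'$ and the normalization $\eta(\R^n)=c_{n,p}^pr^p$ in Lemma~\ref{lem:ball}. The degenerate case $v=o$ is trivial. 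The case $\sigma=m\delta_o$ gives equality with both sides equal to $0$.

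\textbf{Step 4 (equality).} Equality forces both inequalities used above to be equalities. Equality in Step~2 requires $\sigma$ to be concentrated where $q=U_p^\eta$, which should force $\supp\sigma\subseteq B_z(r)$ with $z=g(m)^{-1}v$ and $r=g(m)^{-1}\phi(m)^{1/2}|v|$. Equality in Step~1 requires $U_p^\sigma=|x|^{-p}$ $\eta$-a.e. Since the density of $\eta$ is positive on the open ball, this is the same as Lebesgue-a.e.\ on $B_z(r)$. Conversely, these conditions make every step an equality.

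\textbf{Main obstacle.} I expect the hardest part to be Step~2 together with the optimization in Step~3. One must find an explicit minorant of the piecewise profile (constant $1$ on the ball, $\beta(r^2/|x-z|^2)$ outside) whose optimized bound is exactly $g(m)$. Checking $q\le U_p^\eta$ globally requires convexity or concavity information on $\rho\mapsto\beta(r^2/\rho^2)$. If a single paraboloid is not sharp, my fallback is to let $|z|\to\infty$ with $r^2/|z|^2$ adjusted, to expand $\beta(r^2/|x-z|^2)$ to second order in $x/|z|$, and to read the moment inequality off the second-order terms.
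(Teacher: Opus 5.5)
\textbf{There is a genuine gap, in Step 2: a single capacitary measure cannot give the sharp constant.} Your Step 1 is the right tool: swap the order of integration and test the hypothesis against a measure centred at $z$, as the paper does. Your guess that the outer radius should satisfy $r^2=\phi(m)|z|^2$ is also right. The trouble is that you test against one measure $\eta=\eta_{z,r}$ only.

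First, at the parameter you expect to be optimal, $\beta(r^2/|z|^2)=m$, the Step 1 inequality is empty. The same holds whenever $r^2\ge\phi(m)|z|^2$. Indeed, $U_p^\eta\le 1$ and $\sigma(\R^n)=m$ already give $\int U_p^\eta\,\dd\sigma\le m\le U_p^\eta(o)$ for every $\sigma$, so nothing about $\sigma$ is used.

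Second, for every choice of $(z,r,w,\alpha,\lambda)$, your chain $\int q\,\dd\sigma\le\int U_p^\eta\,\dd\sigma=\int U_p^\sigma\,\dd\eta\le U_p^\eta(o)$ is strict at the extremal measures. These exist: they are the inversions of ball capacitary measures appearing in the proof of Lemma~\ref{lem:gradient}. Each has a positive density on a full ball $B_{z_0}(r_0)$, and satisfies $U_p^\sigma(x)=|x|^{-p}$ exactly on that ball and $U_p^\sigma(x)<|x|^{-p}$ off it.
\begin{itemize}
\item Equality in the Fubini step forces your test ball $B_z(r)$ inside $B_{z_0}(r_0)$, up to null sets.
\item Equality in the minorant step then forces $q=U_p^\eta=1$ a.e.\ on $B_z(r)$, a set of positive measure. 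This is impossible for a paraboloid with $\lambda>0$.
\item With $\lambda=0$ you would need $1\le U_p^\eta$ everywhere, which fails off $B_z(r)$.
\end{itemize}
Put differently, a paraboloid tangent to $\rho\mapsto\beta(r^2/\rho^2)$ at some $\rho_0>r$ touches $U_p^\eta$ only on the sphere $\partial B_z(\rho_0)$, which is disjoint from $B_z(r)$. So your Step 4 cannot produce the stated equality cases, and this family does not deliver $g(m)$.

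The fallback fails too. As $|z|\to\infty$ with $r^2/|z|^2=t$ fixed, the zeroth-order terms read $m\beta(t)\le\beta(t)$. This is strict because $m<1$, and it swamps the first- and second-order terms, so nothing can be read off from them.

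\textbf{The missing idea is to superpose the concentric capacitary measures so that the test potential is itself a paraboloid on the whole ball.} The paper takes $\Lambda$ (called $\lambda$ there) defined by $\int f\,\dd\Lambda=\frac{2n}{p}\int_0^r s\big(\int f\,\dd\eta_s\big)\dd s$, where $\eta_s$ is the capacitary measure of $B_o(s)$.
\begin{itemize}
\item Using $U_p^{\eta_s}=1$ on $B_o(s)$ and $\int_0^1\beta=\frac{n-p}{n}$, one gets $U_p^\Lambda(x)=\frac np r^2-|x|^2$ exactly for $|x|\le r$.
\item Since $\beta<1$, one gets $U_p^\Lambda(x)>\frac np r^2-|x|^2$ for $|x|>r$.
\end{itemize}
The contact set is therefore the whole ball, matching the extremals.

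Your Step 1 applied to $\Lambda$ centred at $z$, with $r=|z|\phi(m)^{1/2}$, gives $\int U_p^\Lambda(z-x)\,\dd\sigma(x)\le U_p^\Lambda(z)=\frac np|z|^2\int_0^{\phi(m)}\beta$. Integrating by parts and using the formula for $g$, this becomes $\int|x|^2\,\dd\sigma\ge 2z\cdot v-|z|^2g(m)$. Choosing $z=v/g(m)$ gives the moment inequality. Because $\Lambda$ has a positive density on $\operatorname{int}B_o(r)$, equality holds exactly when $\supp\sigma\subseteq B_z(r)$ and $U_p^\sigma=|x|^{-p}$ a.e.\ on $B_z(r)$.
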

	
	\begin{proof}
		\smallskip
		
		If $m=0$, then $\sigma=0$ and the equality holds. If $0<m<1$ and
		$v=o$, then \eqref{eq:moment} holds. 
		Since $g(m)>0$, the equality
		holds precisely when $\int_{\R^n} |x|^2\,\dd\sigma(x)=0$, i.e.,
		$\sigma=m\delta_o$.
		
		In the following,  assume that $0<m<1$ and $v\neq o$. Then $g(m)>0$.
	Let $\lambda$ be  the Borel measure such that for every nonnegative Borel measurable function $f$ on $\R^n$,
	\[
	\int_{\R^n} f(y)\,\dd\lambda(y)
	=
	\frac{2n}{p}\int_0^r
	s\big(\int_{\R^n} f(y)\,\dd\eta_p(y)\big)\,\dd s,
	\]
	where $\eta_p$ denotes the $p$-capacitary
	measure of $B_o(s)$.
		Since $U_{p}^{\lambda}(x)
		=\int_{\R^n}|x-y|^{-p}\,\dd\lambda(y)$,
		we have
		$
		U_p^\lambda(o)
		=\frac{2n}{p}\int_0^r s\,\dd s
		=\frac{n}{p}r^2$.
		
		For $0<|x|\leq r$, by Lemma \ref{lem:ball} we have
		\begin{align*}
			&U_{p}^{\lambda}(x)
			=
			\frac{2n}{p}\int_0^r
			s\,U_{p}^{\eta_p}(x)\,\dd s 
			=
			\frac{2n}{p}\int_0^{|x|}
			s\beta(\frac{s^2}{|x|^2})\dd s
			+
			\frac{2n}{p}\int_{|x|}^r s\,\dd s \\
			=&
			\frac{n}{p}|x|^2\int_0^1 \beta(t)\,\dd t
			+
			\frac{n}{p}\bigl(r^2-|x|^2\bigr) 
			= \frac{n}{p}|x|^2 \frac{n-p}{n} + \frac{n}{p}\bigl(r^2-|x|^2\bigr) 
			= \frac{n}{p} r^2-|x|^2.
		\end{align*}
		For $|x|>r$, by Lemma \ref{lem:ball} and $\beta (t)< 1$ for all $t\in(0,1)$,  we have
		\begin{align*}
			\ U_p^\lambda(x) =& \frac{2n}{p}\int_0^r
			s\,U_{p}^{\eta_p}(x)\,\dd s 
			=  \frac{2n}{p}\int_0^{|x|} 
			s \beta (\frac{s^2}{|x|^2})\,\dd s - \frac{2n}{p}\int_r^{|x|}
			s \beta (\frac{s^2}{|x|^2})\,\dd s \\
			>&\frac{n-p}{p}|x|^2 - \frac{2n}{p} \int_{r}^{|x|} s \,\dd s = \frac{n}{p} r^2-|x|^2.
		\end{align*}
		Consequently, we have $U_{p}^{\lambda}(x)
		\geq
		\frac{n}{p} r^2-|x|^2$, for all $x\in\R^n$.
		So,
		\begin{align}\label{eq3.2}
		&\ \ \ \ \int_{\R^n}
		U_{p}^{\lambda}(z-x)\,\dd\sigma(x)
		\geq
		\int_{\R^n}
		\big(
		\frac{n}{p} r^2-|x-z|^2
		\big)\dd\sigma(x)\\
	\nonumber	&=
		\frac{n}{p} |z|^2\phi(m)m
		-\int_{\R^n}|x|^2\,\dd\sigma(x)
		+2|z||v|-|z|^2m
		\end{align}
	with equality if and only if $\supp \sigma \subseteq B_z(r)$. 
		
		Meanwhile, from $U_{p}^{\sigma}(x)\leq |x|^{-p}$ for $x\neq o$, Lemma \ref{lem:ball} and $|z|>r$, we have
		\begin{align}
			\label{eq3.3}&\ \int_{\R^n}
			U_{p}^{\lambda}(z-x)\,\dd\sigma(x)
			=\ 
			\int_{\R^n}
			U_{p}^{\sigma}(z-y)\,\dd\lambda(y)
			\leq
			\int_{\R^n}
			|z-y|^{-p}\,\dd\lambda(y)\\
			\nonumber=&\ 
			U_{p}^{\lambda}(z)
			=
			\frac{2n}{p}\int_0^r
			s\beta(\frac{s^2}{|z|^2})\dd s 
			= 
			\frac{n}{p} |z|^2
			\int_0^{\phi(m)}\beta(u)\,\dd u\\
			\nonumber	=&\
			\frac{n}{p} |z|^2
			\big(
			\phi(m)m
			-
			\frac{
				\int_0^{\phi(m)}
				u^{\frac{p}{2}}(1-u)^{\frac{n-p}{2}-1}\,\dd u}
			{\int_0^1
				u^{\frac{p}{2}-1}(1-u)^{\frac{n-p}{2}-1}\,\dd u}
			\big).
		\end{align}
		
		Combining \eqref{eq3.2} and \eqref{eq3.3}, together with that $$
		g(m)=m-
		\frac{n
			\int_0^{\phi(m)}
			u^{\frac{p}{2}}(1-u)^{\frac{n-p}{2}-1}\,\dd u}
		{p\int_0^1
			u^{\frac{p}{2}-1}(1-u)^{\frac{n-p}{2}-1}\,\dd u},
		$$ it follows that $	\int_{\R^n}|x|^2\,\dd\sigma(x)
		\geq
		2|z||v|
		-
		|z|^2g(m).$
		That is,
		\begin{align*}
			\big|\int_{\R^n} x\,\dd\sigma(x)\big|^2
			\leq
			g(m) \int_{\R^n} |x|^2\,\dd\sigma(x).
		\end{align*}
		
		It remains to characterize the  equality conditions. The equality in \eqref{eq:moment} holds iff the equalities in \eqref{eq3.2} and \eqref{eq3.3}  hold simultaneously, and iff $\supp\sigma\subseteq B_{z}(r)$ and $U_p^\sigma(z-y)
		=|z-y|^{-p}$ for $\lambda$-almost every $y\in\mathbb{R}^n$.
		In addition, $\supp\lambda\subseteq B_o(r)$ and $\lambda$ is absolutely
		continuous and has a strictly positive density on $\operatorname{int}B_o(r)$  by Lemma \ref{lem:ball}. Thus, the equality in \eqref{eq:moment} holds iff $\supp\sigma\subseteq B_{z}(r)$ and $U_p^\sigma(z-y)
		=|z-y|^{-p}$ for Lebesgue-almost every $y\in B_o(r).$
		This completes the proof.
	\end{proof}

	Define $F:(0,1)\to\R$ by
	\begin{align}
	\label{defofF}	F(t)=
		\begin{cases}
			-\frac{1}{2}\log \phi(t),&n=2,\\[5pt]
			\frac{1-\phi(t)^{\frac{n-2}{2}}}{n-2},&n\geq3.
		\end{cases}
	\end{align}
	
	\begin{lemma}\label{lem:gradient}
		If $\sigma$ is a nonzero finite Borel measure on $\R^n$, $x\in\R^n\setminus\supp\sigma$ and $U:= U_{p}^{\sigma} \leq 1$ on $\R^n$,
		then  $U(x)\in (0,1)$,  $\Delta (F\circ U)(x)\leq 0$ and
		$\Delta (F\circ U)(x)= 0$ if and only if  $\sigma$ is the $p$-capacitary measure of  some closed ball $B$ with positive radius and $x\notin B$.
	\end{lemma}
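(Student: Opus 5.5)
The plan is to compute $\Delta(F\circ U)(x)=F'(U)\Delta U+F''(U)|\nabla U|^2$ at $x\notin\supp\sigma$ and to control $|\nabla U|^2$ by $\Delta U$ through Lemma~\ref{lem:moment}. The link between the two is a Kelvin-type inversion of $\sigma$. By translation invariance we may assume $x=o$.

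\emph{Step 1: $U(o)\in(0,1)$.} Since $o\notin\supp\sigma$, $U$ is smooth near $o$. We have $U(o)>0$ because $\sigma\neq0$. On $\R^n\setminus\supp\sigma$ we also have
\[
\Delta U(y)=p(p+2-n)\int|y-w|^{-p-2}\,\dd\sigma(w)>0,
\]
because $p>n-2$. So $U$ is strictly subharmonic there. Since $U\le1$, the strong maximum principle rules out $U(o)=1$.

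\emph{Step 2: the bound on the gradient.} Put $m:=U(o)$. Define $\tilde\sigma$ as the push-forward of $|y|^{-p}\dd\sigma(y)$ under the inversion $y\mapsto y^*=y/|y|^2$. Then
\[
\tilde\sigma(\R^n)=m,\qquad \int w\,\dd\tilde\sigma=\int y|y|^{-p-2}\dd\sigma=-\tfrac1p\nabla U(o),\qquad \int|w|^2\dd\tilde\sigma=\int|y|^{-p-2}\dd\sigma=\frac{\Delta U(o)}{p(p+2-n)}.
\]
All of these are finite because $\supp\sigma$ stays away from $o$. The identity $|w-y^*|\,|y|=|w|\,|w^*-y|$ gives
\[
U_p^{\tilde\sigma}(w)=|w|^{-p}U(w^*)\le|w|^{-p}.
\]
Hence Lemma~\ref{lem:moment} applies and yields
\[
|\nabla U(o)|^2\le \frac{p\,g(m)}{p+2-n}\,\Delta U(o).
\]

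\emph{Step 3: the ODE for $F$.} I would check directly that $F''(t)\ge0$ and that
\[
F'(t)+\frac{p\,g(t)}{p+2-n}F''(t)=0\qquad\text{on }(0,1).
\]
For this, write $F'=-\tfrac12\phi^{\frac{n-4}{2}}\phi'$ (this covers $n=2$ as well) and use
\[
\phi'(t)=1/\beta'(\phi(t))=C\,\phi^{1-\frac p2}(1-\phi)^{1-\frac{n-p}{2}}.
\]
Then $F'=-\tfrac C2\,\phi^{\frac{n-p-2}{2}}(1-\phi)^{\frac{p-n+2}{2}}$. Differentiating once more and comparing with $g=c_{n,p}^p\phi^{p/2}(1-\phi)^{(n-p)/2}$ should reduce to a constant identity. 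The constant works out because $C$ and $c_{n,p}^p$ are both ratios of Beta integrals; this normalization check is the step I expect to be the main computational obstacle.

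Granting Step 3, and using $F'<0\le F''$, we get
\[
\Delta(F\circ U)(o)\le\Big(F'(m)+\tfrac{p g(m)}{p+2-n}F''(m)\Big)\Delta U(o)=0.
\]

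\emph{Step 4: equality cases.} If $F''(m)>0$, equality forces equality in Lemma~\ref{lem:moment}. The alternative $\tilde\sigma=m\delta_o$ is impossible, since it would put the mass of $\sigma$ at infinity. So $\tilde v\ne o$, $\supp\tilde\sigma\subseteq B_z(r)$ with $|z|>r$, and $U_p^{\tilde\sigma}=|w|^{-p}$ a.e. on $B_z(r)$. Inverting back, $B_z(r)$ corresponds to a closed ball $B$ with $o\notin B$, $\supp\sigma\subseteq B$, and $U=1$ a.e. on $B$, hence quasi-everywhere by lower semicontinuity and the equality properties of $\eta_p$. Uniqueness of the equilibrium measure (Lemma~\ref{lem:variation-polar}, via the energy identity $I_p(\sigma)=\sigma(\R^n)$ together with \eqref{eq:equilibrium-voltage}) then identifies $\sigma$ as the $p$-capacitary measure of $B$.

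For the converse, if $\sigma$ is the capacitary measure of a ball $B$ with $x\notin B$, then Lemma~\ref{lem:ball} gives $U=\beta(s^2/|x-c|^2)$. This means $\phi(U)=s^2|x-c|^{-2}$, so $F\circ U$ is a multiple of the fundamental solution $|x-c|^{2-n}$ (respectively $\log|x-c|$ when $n=2$) plus a constant, and is therefore harmonic off $B$.
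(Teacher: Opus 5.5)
Your proposal follows the paper's proof step for step. The paper's measure $\lambda$ is your $\tilde\sigma$ (inversion about $x$). It applies Lemma~\ref{lem:moment} to get $|\nabla U(x)|^2\le\frac{p\,g(U(x))}{p-n+2}\Delta U(x)$, uses $F'/F''=-\frac{p}{p-n+2}g$, and in the equality case inverts $B_z(r)$ back to a ball $B$ and appeals to uniqueness of the equilibrium measure. The normalization you left open in Step~3 does close. With $C=\int_0^1u^{\frac p2-1}(1-u)^{\frac{n-p}2-1}\,\dd u=\Gamma(\frac p2)\Gamma(\frac{n-p}2)/\Gamma(\frac n2)$ one gets $F'/F''=-\frac{2}{(p-n+2)C}\phi^{\frac p2}(1-\phi)^{\frac{n-p}2}$. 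The required identity $p\,c_{n,p}^p\,C=2$ is then just $\Gamma(1+\frac p2)=\frac p2\Gamma(\frac p2)$. There is also a harmless sign slip: $\int w\,\dd\tilde\sigma=+\frac1p\nabla U(o)$.

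The one step that fails as written is in Step~4. From $U=1$ Lebesgue-a.e.\ on $B$ you cannot conclude $U=1$ quasi-everywhere on $B$ ``by lower semicontinuity''. Lower semicontinuity gives $U(x_0)\le\liminf_{y\to x_0}U(y)$, which is the wrong direction, since $U\le1$ already. Upgrading a.e.\ to q.e.\ for Riesz potentials with $p>n-2$ would need finer tools such as fine continuity or thinness, which you do not supply. Your energy identity $I_p(\sigma)=\sigma(\R^n)$ needs $U=1$ $\sigma$-a.e., so this is a real hole.

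You do not need the upgrade at all. By Lemma~\ref{lem:ball}, $\eta_B$ is absolutely continuous and $U_p^{\eta_B}=1$ everywhere on $B\supseteq\supp\sigma$. Fubini then gives $\sigma(\R^n)=\int U_p^{\eta_B}\,\dd\sigma=\int U\,\dd\eta_B=\eta_B(\R^n)=V_p(B)^{-1}$; here $U=1$ is only used $\eta_B$-a.e., which follows from Lebesgue-a.e. Next, $U\le1$ yields $I_p(V_p(B)\sigma)=V_p(B)^2\int U\,\dd\sigma\le V_p(B)$. Minimality and uniqueness then force $V_p(B)\sigma=\mu_B$, i.e.\ $\sigma=\eta_B$. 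This is exactly how the paper finishes.
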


		\begin{proof}
			
			\textbf{Step 1.} Prove that $U(x)\in(0,1)$ and $\Delta(F\circ U)(x)\leq0$.
			
			Since $\sigma(\R^n)>0$, it follows that for every
			$x\in\R^n\setminus\supp\sigma$,
			\begin{align*}
				U(x)=\int_{\R^n}|y-x|^{-p}\,\dd\sigma(y)>0,\quad
				\Delta U(x)=p(p-n+2)\int_{\R^n}|y-x|^{-p-2}\,\dd\sigma(y)>0.
			\end{align*}
			Assume $U(x)=1$. Added with that $U\leq1$ on $\R^n$, it follows that
			$x$ is a maximum of $U$ and hence $\Delta U(x)\leq0$,
			contradicting $\Delta U(x)>0$. Therefore, $U(x)\in(0,1)$.
			
			Let $\lambda$ be the Borel measure such that for every nonnegative Borel measurable function $f$ on $\R^n$,
			\[
			\int_{\R^n} f(y)\,\dd\lambda(y)
			=
			\int_{\R^n}
			f\big(\frac{y-x}{|y-x|^2}\big)
			|y-x|^{-p}\,\dd\sigma(y).
			\]

			Three observations are in order. First, by $U\leq1$ on $\R^n$,
			we have for $\xi\neq o$,
			\begin{align*}
				U_p^\lambda(\xi)=
				\int_{\R^n}
				\big|\xi-\frac{y-x}{|y-x|^2}\big|^{-p}
				|y-x|^{-p}\,\dd\sigma(y)=
				|\xi|^{-p}
				U\big(x+\frac{\xi}{|\xi|^2}\big)
				\leq|\xi|^{-p}.
			\end{align*}
			Second, by $x\notin\supp\sigma$ and $\sigma(\mathbb{R}^n)<\infty$,
			we have $\int_{\R^n}|y|^2\dd\lambda(y)
			\!=\!
			\int_{\R^n}|y-x|^{-p-2}\dd\sigma(y)\!<\!\infty.$
			Third, $	\lambda(\R^n)
			=
			\int_{\R^n}|y-x|^{-p}\,\dd\sigma(y)
			=
			U(x)\in(0,1).$ Thus,
			by Lemma~\ref{lem:moment} we obtain
			\begin{align}\label{eq3.4}
				\big|\int_{\R^n}y\,\dd\lambda(y)\big|^2
				\leq
				g\big(U(x)\big)
				\int_{\R^n}|y|^2\,\dd\lambda(y).
			\end{align}
			Combining this with the following identities
			\begin{align*}
				\nabla U(x)
				&=
				p\int_{\R^n}(y-x)|y-x|^{-p-2}\,\dd\sigma(y)
				=
				p\int_{\R^n}y\,\dd\lambda(y),\\
				\Delta U(x)
				&=
				p(p-n+2)\int_{\R^n}|y-x|^{-p-2}\,\dd\sigma(y)
				=
				p(p-n+2)\int_{\R^n}|y|^2\,\dd\lambda(y),
			\end{align*}
			we obtain that
			\begin{align}\label{eq:gradient}
				\frac{|\nabla U(x)|^2}{p^2}
				\leq
				g(U(x))
				\frac{\Delta U(x)}{p(p-n+2)}.
			\end{align}
			
			By the definition of $F$ and the identity
			$\beta(\phi(t))=t$, we have
			\[
			F'(t)
			=
			-\frac{1}{2}
			\big(
			\int_0^1
			u^{\frac{p}{2}-1}(1-u)^{\frac{n-p}{2}-1}\,\dd u
			\big)
			\phi(t)^{\frac{n-p}{2}-1}
			\big(1-\phi(t)\big)^{\frac{p-n+2}{2}}
			<0
			\]
			and
			\[
			\frac{F'(t)}{F''(t)}
			=
			-\frac{p}{p-n+2}\,g(t),
			\qquad 0<t<1.
			\]
			Since $g(t)>0$, it follows that $F''(t)>0$.
			So, for $x\in\R^n\setminus\supp\sigma$,
			by \eqref{eq:gradient} we have
			\begin{align*}
				\Delta(F\circ U)(x)
				&=
				F''(U(x))|\nabla U(x)|^2
				+
				F'(U(x))\Delta U(x)\\
				&\leq
				\big(
				F''(U(x))
				\frac{p}{p-n+2}\,g(U(x))
				+
				F'(U(x))
				\big)\Delta U(x)
				=0.
			\end{align*}
			That is, $\Delta(F\circ U)(x)\leq0$.
			
			\textbf{Step 2.} Characterize the equality condition.
			
			Suppose that $\sigma$ is the $p$-capacitary measure of
			$B=B_w(s)$, for some $w\in\R^n$ and $s>0$.
			By Lemma~\ref{lem:ball},
			$\phi(U(x))=\frac{s^2}{|x-w|^2}$ for $x\notin B$.
			Substitution in the definition of $F$
			and direct differentiation give
			$\Delta(F\circ U)(x)=0$.
			
			Conversely, assume that $\Delta(F\circ U)(x)=0$.
			Then the equality holds in \eqref{eq3.4}.
			Write $m=\lambda(\R^n)$,
			$v=\int_{\R^n}y\,\dd\lambda(y)$,
			$z=\frac{v}{g(m)}$ and $r=|z|\phi(m)^\frac{1}{2}$.
			Since $g(m)>0$ and $\Delta U(x)>0$, 
			\[
			|v|^2
			=
			g(m)\int_{\R^n}|y|^2\,\dd\lambda(y)
			=
			g(m)\frac{\Delta U(x)}{p(p-n+2)}>0.
			\]
			Thus $v\neq o$.
			By Lemma~\ref{lem:moment}, we have
			\[
			\supp\lambda\subseteq B_z(r),
			\qquad
			U_p^\lambda(y)=|y|^{-p}
			\quad\text{for Lebesgue-a.e. }y\in B_z(r).
			\]
		 	By the definition of $\lambda$ and $U_p^\lambda(y)
			=
			|y|^{-p}U\big(x+\frac{y}{|y|^2}\big),$
			it follows that
			\[
			0=\lambda(\R^n\setminus B_z(r))
			=
			\int_{\R^n}
			\mathbf 1_{\R^n\setminus B_z(r)}
			\big(\frac{y-x}{|y-x|^2}\big)
			|y-x|^{-p}\,\dd\sigma(y)
			\]
			and
			\[
			U\big(x+\frac{y}{|y|^2}\big)=1
			\quad\text{for Lebesgue-a.e. }y\in B_z(r).
			\]
			Let $\tau(y)=\frac{y-x}{|y-x|^2}$, for $y\in \mathbb{R}^n\setminus \{x\}$.
			Then $\tau : \mathbb R^n\setminus\{x\} \to \mathbb R^n\setminus\{o\}$ is a diffeomorphism and $\tau^{-1}(y)=x+\frac{y}{|y|^2}$ for $y\in \mathbb{R}^n\setminus \{o\}$.
			Since $m\in(0,1)$ and $0<\phi(m)<1$, it follows that $0<r<|z|$ and therefore $o\notin B_{z}(r)$.
			Let $B = \tau^{-1}(B_z(r))
			=
			B_{x+\frac{z}{|z|^2-r^2}}
			\big(\frac{r}{|z|^2-r^2}\big)$.
			So, $\supp\sigma\subseteq \tau^{-1}(B_z(r))= B$
			and $U(w)=1$ for Lebesgue-a.e. $w\in B$. 
			In the following, we show that $\sigma$ is the
			$p$-capacitary measure of $B$.

			Let $\mu_B$ be the $p$-equilibrium measure of $B$,
			and $\eta_B=\frac{\mu_B}{ V_p(B)}$.
			By Lemma~\ref{lem:ball},
			$U_p^{\eta_B}=1$ on $B$,
			and $\eta_B$ is supported on $B$ and absolutely
			continuous with respect to Lebesgue measure.
			Since $\supp\sigma\subseteq B$,
			$U_p^{\eta_B}=1$  on $B$,
			$U=1$ Lebesgue-a.e. on $B$
			and $\eta_B$ is absolutely continuous,
			by changing the order of integration, we have
			\[
			\sigma(\R^n)
			=
			\int_B U_p^{\eta_B}\dd\sigma
			=
			\int_B U\dd\eta_B
			=
			\eta_B(\R^n)=V_p(B)^{-1}.
			\]
			So, $V_p(B)\sigma\in\mathcal P(B)$.
			By the definition of $V_p(B)$, we have $I_p(V_p(B)\sigma)\ge V_p(B)$.
			
			Meanwhile, by $U\leq1$, we have
			\[
			I_p(V_p(B)\sigma)
			=
			V_p(B)^2\int_B U\dd\sigma
			\leq	V_p(B)^2\sigma(\R^n)=V_p(B).
			\]
			Thus, $I_p(	V_p(B)\sigma)=V_p(B)$,
			and therefore $	V_p(B)\sigma$ is a
			$p$-equilibrium measure of $B$.
			By the uniqueness assertion in
			Lemma~\ref{lem:variation-polar},
			$V_p(B)\sigma=\mu_B$,
			and hence $	\sigma=\eta_B$.
		\end{proof}

	In the following, we prove Theorem \ref{thm:main}. 
	
	\begin{theorem}\label{thm:main1}
	If $n\geq2$, $p\in(n-2,n)$ and $K$ is a nonempty compact set in $\mathbb{R}^n$, then 
		\begin{equation}\label{main ineq 3.3}
			\Cap_p(K)
			\leq
			c_{n,p}\Cap_{n-2}(K)
		\end{equation}
		with equality if and only if  $\Cap_{n-2}(K\setminus B)=0$ for some closed ball $B\subseteq K$.
	\end{theorem}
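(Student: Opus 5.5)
We follow the strategy announced in the introduction. First we dispose of degenerate cases. If $\Cap_{n-2}(K)=0$, then $K$ is polar, so $\Cap_p(K)=0$ (for $p>n-2$ every compact set of positive $p$-capacity carries a measure of finite $p$-energy, hence of finite $(n-2)$-energy, hence has positive $(n-2)$-capacity), and \eqref{main ineq 3.3} is trivial. For $n=2$ the analogous statement uses $\Cap_0$, and Lemma~\ref{lem:riesz-log-limit} gives finiteness of $I_0$. Otherwise let $\eta=\eta_p$ be the $p$-capacitary measure of $K$ and $U=U_p^{\eta}$. By \eqref{eq:equilibrium-voltage}, $0<U\le 1$ on $\R^n$ and $U=1$ $p$-q.e. on $K$. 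By Lemma~\ref{lem:gradient}, $F\circ U$ is superharmonic on $\R^n\setminus K\supseteq\Omega_K$, because $\supp\eta\subseteq K$. Let $\mu$ be the $(n-2)$-capacitary measure of $K$ (resp.\ the logarithmic equilibrium measure if $n=2$), and let $U_{n-2}$ be its potential. This potential is harmonic on $\Omega_K$, equals $1$ (resp.\ $V_0(K)$) q.e.\ on $\partial\Omega_K$, and by regularity at q.e.\ boundary points has boundary limit $1$ there.

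We define $W=F\circ U-\frac{1}{n-2}\bigl(1-U_{n-2}^{\mu}\bigr)$ when $n\ge3$, and $W=F\circ U+U_0^{\mu}-V_0(K)$ when $n=2$, adjusting constants so that $W\to0$ at q.e.\ boundary points. Since $F(t)\to0$ as $t\to1^-$ and $U\to1$ at $p$-regular boundary points, this adjustment is consistent, after checking that $(n-2)$-q.e.\ boundary points are also points where $\liminf U\ge$ the value forced by lower semicontinuity. Here we use that $U$ is lower semicontinuous with $U=1$ q.e.\ on $K$, together with the fact that sets of zero $p$-capacity have zero $(n-2)$-capacity; with some care one can then show $\liminf_{x\to x_0}U(x)=1$. $W$ is superharmonic on $\Omega_K$ and bounded below.

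Next we compute the asymptotics at infinity. As $|x|\to\infty$ we have $U(x)=\eta(\R^n)|x|^{-p}(1+O(|x|^{-1}))$ with $\eta(\R^n)=\Cap_p(K)^p$. Near $t=0$ we have $\beta(t)\sim C t^{p/2}$ with $C^{-1}=\frac p2 B(\frac p2,\frac{n-p}2)$, so $\phi(t)\sim (t/C)^{2/p}$. Hence $\phi(U(x))^{\frac{n-2}{2}}=(\Cap_p(K)^p/C)^{\frac{n-2}{p}}|x|^{-(n-2)}(1+O(|x|^{-1}))$, and checking with balls (Lemma~\ref{lem:ball}) identifies $C^{1/p}=c_{n,p}$. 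This yields the expansion for $|x|^{n-2}W$ stated in the introduction. For $n\ge3$, $W\to0$ at infinity, so Lemma~\ref{lem:exterior-minimum} gives $W\ge0$, and multiplying by $|x|^{n-2}$ and letting $|x|\to\infty$ gives $\Cap_{n-2}(K)^{n-2}\ge c_{n,p}^{-(n-2)}\Cap_p(K)^{n-2}$. For $n=2$, the $\log$ terms cancel and the constant term gives $\log(c_{2,p}\Cap_0/\Cap_p)\ge0$. This requires first reducing to $\Cap_0(\partial\Omega_K)>0$, which holds since $\Cap_0(K)=\Cap_0(\partial\Omega_K)$ via the filled-in set and the fact that potentials depend only on the outer boundary.

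For the equality case, sufficiency follows from Lemma~\ref{lem:capacity-polar-removal} (which gives $\Cap_q(K)=\Cap_q(B)$ for $q=p,n-2$, since $(n-2)$-polar sets are $p$-null) together with \eqref{eq:ball-capacity-ratio}. For necessity, equality forces the leading coefficient of the nonnegative superharmonic function $W$ to vanish. Then $W$ is a nonnegative superharmonic function on $\Omega_K$ decaying faster than $|x|^{2-n}$; by the Riesz decomposition its Riesz measure $-\Delta W$ must vanish, since a positive Riesz measure would produce a Green potential with nonzero $|x|^{2-n}$ coefficient. Thus $\Delta(F\circ U)=0$ at some, indeed every, point of $\Omega_K$, and Lemma~\ref{lem:gradient} shows that $\eta$ is the capacitary measure of a ball $B$ with $\Omega_K\cap B=\emptyset$. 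Then $\supp\eta=B\subseteq K$, and $W\equiv0$ forces $U_{n-2}^{\mu}$ to equal the ball's potential, so $\mu$ is the equilibrium measure of $B$. It follows that $\Cap_{n-2}(K)=\Cap_{n-2}(B)$, which gives $\Cap_{n-2}(K\setminus B)=0$ via the uniqueness of equilibrium measures.

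The main obstacle is the boundary behaviour: verifying that $\liminf F(U(x))\ge0$ and that $U_{n-2}^\mu\to1$ at $(n-2)$-q.e.\ point of $\partial\Omega_K$. The first is immediate since $F\ge0$ on $(0,1)$, because $\phi\le1$. The second uses the Wiener regularity of q.e.\ boundary points. A secondary difficulty is the rigidity step in the equality case.
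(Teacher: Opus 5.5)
Your proposal follows the paper's proof essentially step by step. It uses the same auxiliary function $W$, superharmonicity of $F\circ U$ via Lemma~\ref{lem:gradient}, the generalized minimum principle (Lemma~\ref{lem:exterior-minimum}), the same expansion at infinity, and the equality case of Lemma~\ref{lem:gradient} for rigidity. Two points need correcting, and neither breaks the argument.

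\textbf{The boundary remark in your second paragraph is false.} For $p>n-2$ the implication runs $\Cap_{n-2}(E)=0\Rightarrow\Cap_p(E)=0$, not the converse. Consequently the asserted $\liminf_{x\to x_0}U(x)=1$ at $(n-2)$-q.e.\ $x_0\in\partial\Omega_K$ fails in general. Take $K=B\cup L$, with $B$ a ball and $L$ a Cantor set disjoint from $B$ of Hausdorff dimension in $(n-2,p)$. Then $\Cap_p(L)=0<\Cap_{n-2}(L)$, $\eta_p$ is the capacitary measure of $B$, and $U<1$ near $L\subseteq\partial\Omega_K$. So no ``adjustment of constants'' making $W\to0$ at the boundary is possible, and none is needed. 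As you note at the end, $F\ge0$ gives $\liminf W\ge\liminf (U^{\mu}_{n-2}-1)/(n-2)$ (similarly for $n=2$). The latter is $\ge0$ at q.e.\ boundary points simply by lower semicontinuity of $U_{n-2}^{\mu}$ together with $U_{n-2}^{\mu}=1$ q.e.\ on $K$. This is the paper's argument, and Wiener regularity is not needed.

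\textbf{The endgame of the equality case needs reordering.} Your Riesz-decomposition argument replaces the paper's strong minimum principle plus comparison with $c(r_0/|x|)^{n-2}$ outside a large ball. It is valid, but it yields only $\Delta W=0$ on $\Omega_K$. That is exactly what Lemma~\ref{lem:gradient} needs. Concluding $W\equiv0$ would take an extra step. Even then, equality of potentials on $\Omega_K$ would not by itself identify $\mu$, since a measure and its balayage share exterior potentials. Finish as the paper does:
\begin{itemize}
\item From $\Cap_p(K)^p=\eta_p(\R^n)=\Cap_p(B)^p$, the assumed equality gives $\Cap_{n-2}(K)=\Cap_{n-2}(B)$.
\item The $(n-2)$-equilibrium measure of $B$ then lies in $\mathcal P(K)$ and attains $V_{n-2}(K)$, so by uniqueness it is the equilibrium measure of $K$.
\item Its potential is $<V_{n-2}(K)$ on $\R^n\setminus B$ but $=V_{n-2}(K)$ q.e.\ on $K$. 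Hence $\Cap_{n-2}(K\setminus B)=0$.
\end{itemize}
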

	
	\begin{proof}
		We first reduce the proof of inequality \eqref{main ineq 3.3} to the case where $\Cap_{n-2}(K)>0$ and  $\Cap_p(K)>0$. Indeed, if $\Cap_p(K)=0$, then \eqref{main ineq 3.3} is immediately obtained.
		Assume $\Cap_p(K)>0$. Then there exists a probability
		measure $\mu\in\mathcal P(K)$ such that $I_p(\mu)<\infty$. 
		By Lemma \ref{lem:riesz-log-limit}, we obtain $I_{n-2}(\mu)<\infty$, and therefore $\Cap_{n-2}(K)>0$. Equivalently,
		\begin{equation}\label{n-2=0=p}
			\Cap_{n-2}(K)=0
			\quad\Longrightarrow\quad
			\Cap_p(K)=0.
		\end{equation}
		Thus, if $\Cap_{n-2}(K)=0$, then \eqref{main ineq 3.3} is immediately obtained. 
		
		In the following, assume $\Cap_{n-2}(K)>0$ and  $\Cap_p(K)>0$.
		
		Let $\Omega_K$ denote the unbounded component of
		$\R^n\setminus K$.
		When $n=2$, let $\mu_0$ denote the $0$-equilibrium measure
		of $K$. 
		When $n\geq3$, let $\eta_{n-2}$ denote the
		$(n-2)$-capacitary measure of $K$. 
		For $x\in\Omega_K$, define
		\[
		W(x)
		=
		\begin{cases}
			F(U(x))+U_{0}^{\mu_0}(x)-V_0(K), & n=2,\\[6pt]
			F(U(x))
			+\frac{U_{n-2}^{\eta_{n-2}}(x)-1}{n-2},
			& n\geq3,
		\end{cases}
		\]
		where $U=U_{p}^{\eta_p}$, $\eta_p$ is the $p$-capacitary measure
		of $K$, and $F$ is defined by \eqref{defofF}.

		\textbf{Step 1.}  We establish the inequality \eqref{main ineq 3.3} by Lemma \ref{lem:exterior-minimum}.
		
		By \eqref{eq:equilibrium-voltage}, we have
		$\eta_p(\R^n)=\Cap_p(K)^p>0$ and $U\leq1$ on $\R^n$.
		Hence, it follows from Lemma~\ref{lem:gradient}  that $U\in(0,1)$ on
		$\Omega_K\subseteq\R^n\setminus\supp\eta_p$, and $\Delta (F\circ U) \leq 0$  on $\Omega_K$. Since $F\geq0$ on $(0,1)$,
		we have $F(U(x))\geq0$ for $x\in\Omega_K$.
		
		\textbf{Case 1.}  Let $n=2$. First, for $x\in\Omega_K$, we have
		\begin{align*}
			\Delta U_0^{\mu_0}(x)
			=
			\int_K
			\Delta_x\big(-\log|x-y|\big)\,\dd\mu_0(y)
			=
			0.
		\end{align*}
	So, $\Delta W\leq 0$ on $\Omega_K$. 
	Second, choose $r_0>1$ such that $K\subseteq B_o(r_0)$. 
	For $x\in B_o(r_0)\cap\Omega_K$,
	\begin{align*}
		W(x) &= (F\circ U)(x) + U_{0}^{\mu_0}(x)-V_0(K)
		\geq
		U_{0}^{\mu_0}(x)-V_0(K)\\
		&=
		\int_K-\log|x-y|\,\dd\mu_0(y)-V_0(K)
		\geq
		-\log(2r_0)-V_0(K).
	\end{align*}
	
	To prove that 	$W$ is bounded below on $\Omega_K$, it suffices to prove that $W$ is bounded below
	on $\Omega_K\setminus B_o(r_0)$.
	By the definition of $\beta,\phi$ and $F$, 
	we have as $t\to 0^+$,
	\begin{align*}
		&\beta(t)
		=c_{2,p}^p t^{\frac{p}{2}}\big(1+O(t)\big),\quad 
		\phi(t)
		=c_{2,p}^{-2}t^{\frac{2}{p}}
		\big(1+O(t^{\frac{2}{p}})\big),\\
			&F(t)
		=-\frac{1}{2}\log\phi(t)
		=-\frac{1}{p}\log t+\log c_{2,p}
		+O(t^{\frac{2}{p}}).
	\end{align*}
		Meanwhile, as $|x|\to\infty$,
	\begin{align*}
		&U(x)
		=\int_K|x-y|^{-p}\,\dd\eta_p(y)=|x|^{-p}\big(1+O(|x|^{-1})\big)\Cap_p(K)^p,\\
		&U_0^{\mu_0}(x)
		=\int_K-\log|x-y|\,\dd\mu_0(y)=-\log|x|+O(|x|^{-1}).
	\end{align*}
	Hence, as $|x|\to\infty$,
		\begin{align}\label{eq:W-infinity-planar}
			W(x)
			&=(F\circ U)(x)+U_0^{\mu_0}(x)-V_0(K)\\
			\nonumber&=-\frac{1}{p}\log U(x)+\log c_{2,p}
			-\log|x|-V_0(K)+O(|x|^{-1})\\
			\nonumber&=\log\big(
			\frac{c_{2,p}\Cap_0(K)}{\Cap_p(K)}
			\big)+O(|x|^{-1}).
		\end{align}
	So, choosing $r_0$ sufficiently big if necessary, we obtain that
	$W$ is bounded below on $\Omega_K$.

Third, by the planar Frostman theorem
\cite[Theorem~1.12]{saff}, $U_0^{\mu_0}\leq V_0(K)$ on $\mathbb{R}^2$.
Combining this inequality with Lemma~\ref{lem:variation-polar}, we obtain
$U_0^{\mu_0}=V_0(K)$, $0$-quasi-everywhere on $K$.
Since $\partial\Omega_K\subseteq K$, it follows that $	U_0^{\mu_0}=V_0(K)$ $0$-quasi-everywhere on $\partial\Omega_K$.
	Thus, from that $F(U(x))\geq0$ on $\Omega_K$ and the lower semicontinuity of $U_0^{\mu_0}$ (\cite[p.~171]{saff}), it follows that for
	$0$-quasi-every $x_0\in\partial\Omega_K$,
	\begin{align*}
		\liminf_{\substack{x\to x_0\\x\in\Omega_K}}W(x)
		&\geq
		\liminf_{\substack{x\to x_0\\x\in\Omega_K}}
		\big(U_0^{\mu_0}(x)-V_0(K)\big)\geq U_0^{\mu_0}(x_0)-V_0(K)=0.
	\end{align*}
	
		Fourth,  by \cite[p.~173]{saff} we have
		\[
		\supp \mu_0\subseteq\partial\Omega_K \quad \text{and } \quad \Cap_0(\partial\Omega_K)=\Cap_0(K)>0.
	\]
	
		Thus, by Lemma \ref{lem:exterior-minimum}, we have
		$W\geq0$ on $\Omega_K$.
		Taking $|x|\to\infty$ in
		\eqref{eq:W-infinity-planar}, we obtain
		\[
		\log\big(
		\frac{c_{2,p}\Cap_0(K)}{\Cap_p(K)}
		\big)\geq0,
		\]
		which yields the inequality \eqref{main ineq 3.3}.
		
		\textbf{Case 2.}  Let $n\ge 3$.  First, for $x\in\Omega_K$, we have
		\begin{align*}
			\Delta U_{n-2}^{\eta_{n-2}}(x)
			=
			\int_K
			\Delta_x\bigl(|x-y|^{2-n}\bigr)\,\dd\eta_{n-2}(y)
			=
			0.
		\end{align*}
		So, $\Delta W\leq 0$ on $\Omega_K$. 

		Second, by \eqref{eq:equilibrium-voltage}, we have
		$U_{n-2}^{\eta_{n-2}}\in(0,1]$ on $\Omega_K$. 
		Moreover, since $U\in(0,1)$ on $\Omega_K$ and $F\geq0$ on $(0,1)$, we have
		\[
		W(x)
		=
		(F\circ U)(x)
		+
		\frac{U_{n-2}^{\eta_{n-2}}(x)-1}{n-2}
		\geq
		-\frac{1}{n-2}
		\qquad\text{on }\Omega_K.
		\]
		So, $W$ is bounded below on $\Omega_K$.
		
	Third, by Fatou's lemma,  $U_{n-2}^{\eta_{n-2}}$ is lower semicontinuous.
	From that $F(U(x))\geq0$ on $\Omega_K$,
	\eqref{eq:equilibrium-voltage} and
	$\partial\Omega_K\subseteq K$, it follows that for
	$(n-2)$-quasi-every $x_0\in\partial\Omega_K$,
	\begin{align*}
		\liminf_{\substack{x\to x_0\\x\in\Omega_K}}W(x)
		&\geq
		\liminf_{\substack{x\to x_0\\x\in\Omega_K}}
		\frac{U_{n-2}^{\eta_{n-2}}(x)-1}{n-2}\geq
		\frac{U_{n-2}^{\eta_{n-2}}(x_0)-1}{n-2}
		=0.
	\end{align*}

Fourth, by the definition of $\beta,\phi$ and $F$, 
we have as $t\to 0^+$,
\begin{align*}
	&\beta(t)
	=c_{n,p}^p t^{\frac{p}{2}}\big(1+O(t)\big),\quad 
	\phi(t)
	=c_{n,p}^{-2}t^{\frac{2}{p}}
	\big(1+O(t^{\frac{2}{p}})\big),\\
	&F(t)=\frac{1}{n-2}-\frac{c_{n,p}^{2-n}}{n-2}\,t^{\frac{n-2}{p}}
	+O(t^{\frac{n}{p}}).
\end{align*}
Meanwhile, as $|x|\to\infty$,
\begin{align*}
		&U(x)
	=\int_K|x-y|^{-p}\,\dd\eta_p(y)=|x|^{-p}\big(1+O(|x|^{-1})\big)\Cap_p(K)^p,\\
	&U_{n-2}^{\eta_{n-2}}(x)
	=
	\int_K|x-y|^{2-n}\,\dd\eta_{n-2}(y)
	=
	\Cap_{n-2}(K)^{n-2}|x|^{2-n}
	\bigl(1+O(|x|^{-1})\bigr).
\end{align*}
Hence
\begin{align}\label{eq:W-infinity-higher}
	W(x)
	&=
	(F\circ U)(x)
	+
	\frac{U_{n-2}^{\eta_{n-2}}(x)-1}{n-2}\\
	&=
	\frac{
		\Cap_{n-2}(K)^{n-2}
		-
		c_{n,p}^{2-n}\Cap_p(K)^{n-2}}{n-2}
	|x|^{2-n}
	+
	O(|x|^{1-n}).\notag
\end{align}
In particular, $W(x)\to0$ as $|x|\to\infty.$ 
		
	Thus, by Lemma \ref{lem:exterior-minimum}, we have
	$W\geq0$ on $\Omega_K$. 	Multiplying \eqref{eq:W-infinity-higher} by $|x|^{n-2}$ and taking
	$|x|\to\infty$, we obtain
	\[
	\Cap_{n-2}(K)^{n-2}
	-
	c_{n,p}^{2-n}\Cap_p(K)^{n-2}
	\geq0.
	\]
	That is, inequality \eqref{main ineq 3.3} holds.

		\textbf{Step 2.} Characterize the equality conditions.
		
		Assume that $B\subseteq K$ is a closed ball and
		$\Cap_{n-2}(K\setminus B)=0$.
		By \eqref{n-2=0=p}, we have $\Cap_p(K\setminus B)=0$.
		By Lemma~\ref{lem:capacity-polar-removal}, we have
		\[
		\Cap_{n-2}(K)=\Cap_{n-2}(B)\quad\text{and}\quad 
		\Cap_p(K)=\Cap_p(B).
		\]
		If $B$ has radius zero, both capacities vanish and equality follows.
		Otherwise, \eqref{eq:ball-capacity-ratio} yields
		\[
		\Cap_p(K)=\Cap_p(B)=c_{n,p}\Cap_{n-2}(B)
		=c_{n,p}\Cap_{n-2}(K).
		\]
		
		Conversely, assume the equality in \eqref{main ineq 3.3} holds.
		
		If $\Cap_{n-2}(K)=\Cap_p(K)=0$, then $\Cap_{n-2}(K\setminus B_y(0))=0$ for each $y\in K$.
		
		In the following, assume $\Cap_{n-2}(K)>0$.
		From
		\eqref{eq:W-infinity-planar} and
		\eqref{eq:W-infinity-higher}, we have
		\[
		W(x)=O(|x|^{1-n})
		\qquad\text{as }|x|\to\infty.
		\]
		
		We first show that $W\equiv0$ on $\Omega_K$.
		From $W\geq0$, the superharmonic mean value inequality
		\cite[Definition~3.1.2]{armitage-gardiner} and the connectedness of $\Omega_K$, it follows that
	  either $W\equiv0$ or $W>0$ throughout $\Omega_K$.
		In the latter case, choose $r_0>0$ such that
		$K\subseteq\operatorname{int}B_o(r_0)$ and put
		$c=\min_{\partial B_o(r_0)}W>0$.
		Then the function $W(x)-c\big(\frac{r_0}{|x|}\big)^{n-2}$
		is superharmonic and bounded below outside $B_o(r_0)$,
		and is nonnegative on $\partial B_o(r_0)$.
		For $n=2$, $\Cap_0(\partial B_o(r_0))=r_0>0$;
		For $n\geq3$, this function tends to $0$ as $|x|\to\infty$.
		Thus, by Lemma~\ref{lem:exterior-minimum}, we have
		\[
		W(x)\geq c\big(\frac{r_0}{|x|}\big)^{n-2}
		\qquad\text{for }|x|>r_0,
		\]
		which contradicts $W(x)=O(|x|^{1-n})$.
		Hence, $W\equiv0$ on $\Omega_K$, and therefore $\Delta(F\circ U)=\Delta W=0$ on $\Omega_K$.
		By the equality condition of Lemma~\ref{lem:gradient},
		there exists a closed ball $B$ with positive radius such that
		$\eta_p=\eta_B$, where $\eta_B$ is the $p$-capacitary measure of $B$. Then $B=\supp\eta_B=\supp\eta_p\subseteq K$.
		
		Furthermore, by the definition of $\eta_p$ and Lemma~\ref{lem:ball}, we have
		\[
		\Cap_p(K)^p
		=\eta_p(\R^n)
		=\eta_B(\R^n)
		=\Cap_p(B)^p.
		\]
		Added with \eqref{eq:ball-capacity-ratio}, we have
		\begin{align*}
			\Cap_{n-2}(K)
			&=\frac{\Cap_p(K)}{c_{n,p}}
			=\frac{\Cap_p(B)}{c_{n,p}}
			=\Cap_{n-2}(B),
		\end{align*}
		and hence $V_{n-2}(K)=V_{n-2}(B)$.
		
		Let $\mu_B$ be the $(n-2)$-equilibrium measure of $B$.
		Since $I_{n-2}(\mu_B)=V_{n-2}(B)=V_{n-2}(K)$, and $\mu_B\in\mathcal P(K)$, it follows that
		$\mu_B$ is also the $(n-2)$-equilibrium measure of $K$.
		By Lemma~\ref{lem:variation-polar}, we have
		\[
		\Cap_{n-2}\big(
		\big\{x\in K:
		U_{n-2}^{\mu_B}(x)<V_{n-2}(K)\big\}
		\big)=0.
		\]
		Meanwhile, by the disk formula
		\cite[Example~1.10]{saff} when $n=2$,
		and radial harmonicity when $n\geq3$, we have
		\[
		U_{n-2}^{\mu_B}<V_{n-2}(B)=V_{n-2}(K)
		\qquad\text{on }\R^n\setminus B.
		\]
		So,
		$K\setminus B\subseteq
		\big\{x\in K:U_{n-2}^{\mu_B}(x)<V_{n-2}(K)\big\}$,
		and hence $\Cap_{n-2}(K\setminus B)=0$.

		This completes the proof.
	\end{proof}

		\vskip15pt
	\section{\bf Proof of Theorem~\ref{thm:newton-log}}
	\label{sec:first-proof}
		\vskip5pt
		
	Throughout this section, let $n\geq4$ and $\nu$ be the normalized surface area measure on $\mathbb S^{n-1}$. Then $\nu$
	 is  also the $(n-2)$-equilibrium measure and  the $0$-equilibrium measure of the unit ball, $I_{n-2}(\nu)=1,$ and $I_0(\nu)=\log c_n$. Please
	  refer to  \cite[Appendix~A]{continuity} for details.
	
	The spherical-shell formula gives
	\begin{equation}\label{eq:spherical-shell-potential}
		U_{n-2}^{\nu}(x)=
		\begin{cases}
			1,&|x|\leq1,\\
			|x|^{2-n},&|x|>1.
		\end{cases}
	\end{equation}
	By the rotational invariance of $\nu$, we have
	\[
	U_0^{\nu}(x)=I_0(\nu)=\log c_n
	\qquad\text{for }|x|=1.
	\]
	Since $\frac{\dd}{\dd r}U_0^{\nu}(re)
	=-\int_{\mathbb S^{n-1}}
	\frac{r-\theta\cdot e}{r^2+1-2r\theta\cdot e}\,\dd\nu(\theta)<0$ for all $r>1$ and $e\in\mathbb S^{n-1}$, we have
	\begin{equation}\label{eq:log-shell-strict}
		U_0^{\nu}(x)<\log c_n\qquad\text{for }|x|>1.
	\end{equation}
	
	The \emph{classical Riesz composition formula} (See, e.g., \cite{landkof}) reads:
		If $0<p,q<n$ and $p+q>n$, then for 
		$y,z\in\R^n$ with $y\neq z$,
		\begin{equation}
			\label{lem:riesz-composition}
			\int_{\R^n}\frac{\dd x}{|x-y|^p|x-z|^q}
			=
			\pi^{\frac n2}
			\frac{
				\Gamma(\frac{n-p}{2})
				\Gamma(\frac{n-q}{2})
				\Gamma(\frac{p+q-n}{2})
			}{
				\Gamma(\frac p2)
				\Gamma(\frac q2)
				\Gamma(\frac{2n-p-q}{2})
			}|y-z|^{n-p-q}.
		\end{equation}

	For a nonnegative measurable function $f$ satisfying $|\{f>t\}|<\infty$ for $t>0$,
	its symmetric decreasing rearrangement is
	\[
	f^*(x)
	=\inf\big\{t\geq0:|\{f>t\}|\leq\omega_n|x|^n\big\},
	\qquad x\in\R^n.
	\]
         For  such functions $f,g,h$,
     the classical \emph{Riesz rearrangement inequality}
(See, e.g., \cite[Theorem~3.7]{lieb-loss}) reads:
	\begin{align*}
		\int_{\mathbb{R}^n}\int_{\mathbb{R}^n}
		f(x)g(x-y)h(y)\,dx\,dy
		\le
		\int_{\mathbb{R}^n}\int_{\mathbb{R}^n}
		f^*(x)g^*(x-y)h^*(y)\,dx\,dy.
	\end{align*}

The classical \emph{P\'olya--Szeg\H{o} inequality}
	(See, e.g., \cite[Eq.~(1.1) and Theorem~1.1]{burchard-ferone}) reads:
	If $f\in L^{\frac{2n}{n-2}}(\R^n)$ with $f\ge 0$, and its weak gradient $\nabla f\in L^2(\R^n)$,
	then $\nabla f^*\in L^2(\R^n)$ and
	\begin{equation}\label{eq:polya-szego}
		\int_{\R^n}|\nabla f^*|^2\,\dd x
		\leq\int_{\R^n}|\nabla f|^2\,\dd x.
	\end{equation}
	If, in addition, $\big|\{x\in\R^n:0<f^*(x)<\|f\|_{L^\infty},|\nabla f^*(x)|=0\}\big|=0,$
	then equality holds in \eqref{eq:polya-szego} if and only if $	f(x)=f^*(x-y)$ Lebesgue-a.e. on $\R^n$ for some $y\in \R^n$.
	
	The  lemma below follows from the Talenti comparison theorem
	\cite[Theorem~1]{talenti} by approximation. We give a 
	capacitary proof and characterize the equality condition.
	\begin{lemma}\label{lem:distribution}
		If $\mu$ is a compactly supported probability measure
		and $U_{n-2}^{\mu}\leq1$ on $\mathbb{R}^n$, then
		$	\big(U_{n-2}^{\mu}\big)^*\leq U_{n-2}^{\nu}$ Lebesgue-a.e. on $\R^n$.
		Moreover, 
		$(U_{n-2}^{\mu})^*=U_{n-2}^{\nu}$ Lebesgue-a.e. on $\R^n$,
		iff $\mu$ is the normalized surface measure on a sphere of radius $1$.
	\end{lemma}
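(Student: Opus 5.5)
The goal is the distributional comparison $|\{U>t\}|\le |\{U^{\nu}_{n-2}>t\}|$ for every $t\in(0,1)$, where $U=U_{n-2}^{\mu}$. Since $U\le1$, only these levels matter: for $t\ge1$ both sets are (essentially) empty or null, and at $t=1$ the set $\{U>1\}$ is empty. We have $\{U_{n-2}^\nu>t\}=\operatorname{int}B_o(t^{-1/(n-2)})$, of volume $\omega_n t^{-n/(n-2)}$. So the inequality is equivalent to
\[|\{U>t\}|\le \omega_n t^{-\frac{n}{n-2}},\]
and this suffices for $U^*\le U^\nu_{n-2}$ a.e. by the definition of the rearrangement.

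To bound the level set, fix $t\in(0,1)$ and a compact $L\subseteq\{U\ge t\}$; the set $\{U\ge t\}$ is bounded because $U\to0$ at infinity. Let $\eta_L$ be the $(n-2)$-capacitary measure of $L$. We have $U_{n-2}^{\eta_L}=1$ quasi-everywhere on $L$ and $U_{n-2}^{\eta_L}\le1$ everywhere by \eqref{eq:equilibrium-voltage}. By Fubini and $U\ge t$ on $L$,
\[t\,\Cap_{n-2}(L)^{n-2}=t\,\eta_L(\R^n)\le\int U\,d\eta_L=\int U_{n-2}^{\eta_L}\,d\mu\le\mu(\R^n)=1.\]
Here the middle step needs $\eta_L$ to vanish on sets of zero capacity, which holds since its energy is finite. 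Thus $\Cap_{n-2}(L)\le t^{-1/(n-2)}$. The classical isocapacitary inequality $|L|\le\omega_n\Cap_{n-2}(L)^n$, noted after Theorem~\ref{thm:main} as the limit $p\to n^-$ of Theorem~\ref{thm:main} (or quoted from Watanabe), then gives $|L|\le\omega_n t^{-n/(n-2)}$. Inner regularity of Lebesgue measure yields the bound for $\{U\ge t\}\supseteq\{U>t\}$.

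For equality, suppose $U^*=U^\nu_{n-2}$ a.e. Then $|\{U>t\}|=\omega_n t^{-n/(n-2)}$ for all $t\in(0,1)$, and letting $t\to0^+$ shows the total mass picture is tight. I would exploit the equality at the level of Dirichlet energy. Since $n\ge4$, $U\in L^{2n/(n-2)}$, and $\int|\nabla U|^2=c\,I_{n-2}(\mu)$. Moreover $I_{n-2}(\mu)=\int U\,d\mu\le1$, while for $U^\nu_{n-2}$ the corresponding energy equals $c\,I_{n-2}(\nu)=c$. By the P\'olya--Szeg\H{o} inequality \eqref{eq:polya-szego},
\[c=\int|\nabla U^*|^2\le\int|\nabla U|^2=c\,I_{n-2}(\mu)\le c.\]
Hence $I_{n-2}(\mu)=1$ and equality holds in \eqref{eq:polya-szego}. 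The function $U^\nu_{n-2}$ has nonvanishing gradient where $0<U^*<1$, so the rigidity clause applies: $U=U^\nu_{n-2}(\cdot-y)$ a.e., and $\mu$ is the translate of $\nu$ by uniqueness of measures with a given Newtonian potential (taking distributional Laplacians). The converse direction is immediate from \eqref{eq:spherical-shell-potential} and translation invariance.

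I expect the main obstacle to be the equality step: justifying that $\int|\nabla U|^2$ equals $c\,I_{n-2}(\mu)$ for general $\mu$ of finite energy, which needs an approximation argument (if $I_{n-2}(\mu)=\infty$, the bound $\int U\,d\mu\le1$ already rules this out), and checking the hypotheses of the Burchard--Ferone rigidity statement. The identity $\|U\|_\infty=1$ also matters here, and it follows since $U^*=U^\nu_{n-2}$ has supremum $1$.
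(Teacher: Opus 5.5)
Your proposal is correct and follows the paper's proof essentially step for step. For the comparison you test $U_{n-2}^{\mu}$ against the capacitary measure of a compact subset of a level set and then use the isocapacitary inequality with inner regularity; for the equality case you use the chain $\int|\nabla U_{n-2}^{\nu}|^2=\int|\nabla (U_{n-2}^{\mu})^*|^2\le\int|\nabla U_{n-2}^{\mu}|^2=n(n-2)\omega_n I_{n-2}(\mu)\le n(n-2)\omega_n$, followed by the Burchard--Ferone rigidity clause and a distributional Laplacian argument. The paper simply cites Landkof's energy identity for $\int|\nabla U_{n-2}^{\mu}|^2$, and your remark that the Fubini step needs $\eta_L$ not to charge polar sets is unnecessary, since $U\ge t$ holds pointwise on $L\supseteq\operatorname{supp}\eta_L$.
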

	
	\begin{proof}
		We first prove that  $\big(U_{n-2}^{\mu}\big)^*\leq U_{n-2}^{\nu}$
		almost everywhere on $\R^n$.
		
		Fix $0<t<1$, and let $F\subseteq \{U_{n-2}^{\mu}>t\}$ be a 
		compact set. If $\Cap_{n-2}(F)>0$, let $\eta_F$
		be the $(n-2)$-capacitary measure of $F$. 
		By \eqref{eq:equilibrium-voltage}, we have
		\[
		\eta_F(\R^n)=\Cap_{n-2}(F)^{n-2},
		\qquad U_{n-2}^{\eta_F}\leq1\quad\text{on }\R^n.
		\]
		Thus, by the Tonelli theorem,
		\begin{align*}
			t\Cap_{n-2}(F)^{n-2}
			=t\eta_F(\R^n)\leq\int_{\R^n} U_{n-2}^{\mu}\,\dd\eta_F
			=\int_{\R^n} U_{n-2}^{\eta_F}\,\dd\mu\leq\mu(\R^n)=1.
		\end{align*}
		So, $\Cap_{n-2}(F)\leq t^{-\frac{1}{n-2}}$.
		By the classical isocapacitary inequality,  we have
		\begin{align}\label{4.1}
			|F|\leq\omega_n\Cap_{n-2}(F)^n
			\leq\omega_n t^{-\frac{n}{n-2}},
		\end{align}
		which also holds if $\Cap_{n-2}(F)=0$.
		Taking the supremum over compact subsets $F$ of $\{U_{n-2}^{\mu}>t\}$ in \eqref{4.1}  and by \eqref{eq:spherical-shell-potential}, we obtain
		\[
		|\{U_{n-2}^{\mu}>t\}|\leq\omega_n t^{-\frac{n}{n-2}}
		=|\{U_{n-2}^{\nu}>t\}|,\qquad 0<t<1.
		\]
		For $t\geq1$, $|\{U_{n-2}^{\mu}>t\}|=0=|\{U_{n-2}^{\nu}>t\}|$. 
		Hence, $\big(U_{n-2}^{\mu}\big)^*\leq U_{n-2}^{\nu}$
		Lebesgue-a.e. on $\R^n$.
		
		In the following, we prove the equality condition.
		
		If $\mu$ is the normalized surface measure on
		$\partial B_y(1)$ for some $y\in\R^n$, then
		$U_{n-2}^{\mu}(x)=U_{n-2}^{\nu}(x-y)$. By the definition of  symmetric decreasing rearrangement, we have
		$\big(U_{n-2}^{\mu}\big)^*=U_{n-2}^{\nu}$ 	Lebesgue-a.e. on $\R^n$.
		
			Conversely, suppose  that $\big(U_{n-2}^{\mu}\big)^*=U_{n-2}^{\nu}$ 	Lebesgue-a.e. on $\R^n$. Since
		$U_{n-2}^{\mu}\leq1$ and $\mu$ is a probability measure, we have
		\[
		I_{n-2}(\mu)=\int_{\R^n} U_{n-2}^{\mu}\,\dd\mu\leq1<\infty.
		\]
		By the Newtonian energy identity \cite[Theorem 1.20]{landkof}, we have		$\nabla U_{n-2}^{\mu}\in L^2(\R^n)$ and
		\[
		\int_{\R^n}|\nabla U_{n-2}^{\mu}|^2\,\dd x
		=n(n-2)\omega_n I_{n-2}(\mu).
		\]
		Since $\mu$ is compactly supported,
		$U_{n-2}^{\mu}(x)=O(|x|^{2-n})$ as $|x|\to\infty$.
		Combining this with $0<U_{n-2}^{\mu}\leq1$,  we have for sufficiently
		large $R_0$ and some $C>0$,
		\[
		\int_{\R^n}\big(U_{n-2}^{\mu}\big)^{\frac{2n}{n-2}}\,\dd x
		\leq |B_o(R_0)|+C\int_{R_0}^{\infty}r^{-n-1}\,\dd r<\infty.
		\]
		Therefore, $U_{n-2}^{\mu}$ vanishes at infinity, $U_{n-2}^{\mu}\in L^{\frac{2n}{n-2}}(\R^n)$, and
		$\nabla U_{n-2}^{\mu}\in L^2(\R^n)$.
		
		By $I_{n-2}(\nu)=1$ and the P\'olya--Szeg\H{o} inequality \eqref{eq:polya-szego}, we have
		\[
		\begin{aligned}
			&n(n-2)\omega_n
			=\int_{\R^n}|\nabla U_{n-2}^{\nu}|^2\,\dd x
			=\int_{\R^n}|\nabla \big(U_{n-2}^{\mu}\big)^*|^2\,\dd x\\
			\leq&\int_{\R^n}|\nabla U_{n-2}^{\mu}|^2\,\dd x
			=n(n-2)\omega_n I_{n-2}(\mu)\leq n(n-2)\omega_n.
		\end{aligned}
		\]
		Hence, all the equalities hold.
		Since $\big(U_{n-2}^{\mu}\big)^*=U_{n-2}^{\nu}$
		a.e. on $\R^n$, it follows that
		\[
			\|U_{n-2}^{\mu}\|_{L^\infty}
		=\|\big(U_{n-2}^{\mu}\big)^*\|_{L^\infty}=1.
		\]
		Moreover, by \eqref{eq:spherical-shell-potential} we have 	$|\nabla U_{n-2}^{\nu}(x)|
		=(n-2)|x|^{1-n}>0$ for all $x\in \mathbb{R}^n\setminus B_{o}(1)$.
		So, 
		\[
		\big|\{x\in\R^n:0<\big(U_{n-2}^{\mu}\big)^*(x)<1,
		\ |\nabla\big(U_{n-2}^{\mu}\big)^*(x)|=0\}\big|=0.
		\]
	By the equality condition of \eqref{eq:polya-szego}, there
		exists $y\in\R^n$ such that $U_{n-2}^{\mu}(x)=U_{n-2}^{\nu}(x-y)$ Lebesgue-a.e. on $\R^n$.
		Hence, for all $\varphi\in C_c^\infty(\R^n)$,
		\begin{align*}
			&\int_{\R^n}\varphi\,\mathrm d\mu={}-\frac{1}{n(n-2)\omega_n}\int_{\R^n}U_{n-2}^{\mu}(x)\Delta\varphi(x)\,\mathrm dx\\
			={}&-\frac{1}{n(n-2)\omega_n}\int_{\R^n}U_{n-2}^{\nu}(x-y)\Delta\varphi(x)\,\mathrm dx=\int_{\R^n}\varphi(x+y)\,\mathrm d\nu
		\end{align*}
		Thus $\mu$ is the translate of $\nu$ by $y$, namely the normalized
		surface measure on $\partial B_y(1)$.
		
	This completes the proof.
	\end{proof}

	We now prove Theorem~\ref{thm:newton-log}.
	
	\begin{theorem}\label{thm:newton-log4.1}
		If $n\geq4$ and $K$ is a nonempty compact set in $\mathbb{R}^n$, then
		\begin{equation}\label{eq:newton-log4.1}
			\Cap_{n-2}(K)\leq c_n\Cap_0(K)
		\end{equation}
		with equality if and only if $\Cap_0(K\setminus B)=0$ and $\partial B\subseteq K$ for some closed ball $B$.
	\end{theorem}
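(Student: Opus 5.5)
We normalize by scaling: since both capacities are $1$-homogeneous, we may assume $\Cap_{n-2}(K)=1$. Then \eqref{eq:newton-log4.1} says $V_0(K)\le\log c_n=I_0(\nu)$, so it suffices to exhibit one $\mu\in\mathcal P(K)$ with $I_0(\mu)\le I_0(\nu)$. We take $\mu=\mu_{n-2}$, the $(n-2)$-equilibrium measure of $K$. Since $V_{n-2}(K)=1$, \eqref{Lem 2.1 cor} gives $U:=U_{n-2}^{\mu}\le1$. Lemma~\ref{lem:distribution} then yields $U^*\le U_{n-2}^{\nu}$ a.e. The task is to turn this pointwise comparison of potentials into a comparison of logarithmic energies.

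\textbf{Case $n\ge5$.} Take $q\in(0,n-4)$ and set $s=q+4\in(4,n)$. Applying the Riesz composition formula \eqref{lem:riesz-composition} twice gives
\[
k_{n-2}*k_s*k_{n-2}=C_q\,k_q, \qquad C_q>0 .
\]
The first convolution $k_{n-2}*k_s$ needs $s>2$ and the second needs $s>4$, and both hold. By Tonelli,
\[
C_q\,I_q(\mu)=\int\!\!\int U(x)\,|x-y|^{-s}\,U(y)\,\dd x\,\dd y .
\]
The Riesz rearrangement inequality bounds the right side by the same integral with $U$ replaced by $U^*$. Since $U^*\le U_{n-2}^{\nu}$ and the kernel is positive, that is in turn at most the integral with $U_{n-2}^{\nu}$, which equals $C_q I_q(\nu)$. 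Hence $I_q(\mu)\le I_q(\nu)$ for all small $q>0$, so
\[
\frac{I_q(\mu)-1}{q}\le\frac{I_q(\nu)-1}{q}.
\]
Now let $q\to0^+$ using Lemma~\ref{lem:riesz-log-limit}. Its hypothesis holds because $I_{n-2}(\mu)=1<\infty$. The limit gives $I_0(\mu)\le I_0(\nu)=\log c_n$, which is the inequality.

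\textbf{Case $n=4$.} Here $k_2*k_2$ is a logarithm and the composition above no longer converges, so we renormalize. We will derive an asymptotic expansion of the form
\[
\int_{B_o(R)}U^2\,\dd x=a\log R+b-a\,I_0(\mu)+o(1),
\]
with constants $a>0$ and $b$ independent of $\mu$ among probability measures. This comes from writing $\int_{B_o(R)}U^2=\int\!\!\int\big(\int_{B_o(R)}|x-y|^{-2}|x-z|^{-2}\dd x\big)\dd\mu(y)\dd\mu(z)$ and expanding the inner integral uniformly for $y,z$ in a compact set. We then show
\[
\int_{B_o(R)}(U^*)^2-\int_{B_o(R)}U^2=O(R^{-1}),
\]
using that $U$ and $U^*$ agree in distribution and both are $|x|^{-2}(1+O(|x|^{-1}))$ at infinity. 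Combining this with $\int_{B_o(R)}(U^*)^2\le\int_{B_o(R)}(U_{2}^{\nu})^2$, the $\log R$ terms cancel and letting $R\to\infty$ gives $I_0(\mu)\ge$-comparison in the right direction, namely $I_0(\mu)\le I_0(\nu)$. I expect this bookkeeping of divergent terms to be the main technical step.

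\textbf{Equality.} If $\Cap_0(K\setminus B)=0$ and $\partial B\subseteq K$, then Lemma~\ref{lem:capacity-polar-removal} together with monotonicity pins both capacities to those of $\partial B$, and these equal those of $B$ because $\nu$ is the equilibrium measure for both energies. Conversely, suppose equality holds. The limiting argument alone loses strictness, so we aim to show that equality forces $U^*=U_{n-2}^{\nu}$ a.e. For $n=4$ this comes from the renormalized identity, where the deficit is $\lim_R\int_{B_o(R)}\big((U_2^\nu)^2-(U^*)^2\big)\ge0$ and vanishes only if $U^*=U_2^\nu$. For $n\ge5$ we would try a similar renormalized representation of $I_0(\mu)-I_0(\nu)$, e.g.\ as a limit of $q^{-1}$ times the nonnegative rearrangement deficits. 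Lemma~\ref{lem:distribution} then shows that $\mu$ is the normalized surface measure of a unit sphere $\partial B$, so $\partial B\subseteq K$. In this case $\mu$ is also the $0$-equilibrium measure of $K$, and \eqref{eq:log-shell-strict} together with Lemma~\ref{lem:variation-polar} gives $\Cap_0(K\setminus B)=0$. I expect the equality extraction for $n\ge5$ to be the main obstacle.
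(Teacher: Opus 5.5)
Your route is the paper's: normalize $\Cap_{n-2}(K)=1$, test $V_0(K)$ with $\mu=\mu_{n-2}$, and compare $U^*$ with $U_{n-2}^{\nu}$ via Lemma~\ref{lem:distribution}. For $n\ge5$ you use double Riesz composition plus Riesz rearrangement, and for $n=4$ a renormalized expansion of $\int_{B_o(R)}U^2$. Your proof of the inequality for $n\ge5$ is correct.

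The genuine gap is the equality case for $n\ge5$, which you leave open. Once you reduce to $I_q(\mu)\le I_q(\nu)$ and let $q\to0^+$, every trace of $U^*$ versus $U_{n-2}^{\nu}$ is gone. The missing step is to keep the second deficit instead of discarding it. Your chain actually gives
\[
I_q(\nu)-I_q(\mu)\ge C_q^{-1}\iint\frac{D(x,y)}{|x-y|^{q+4}}\,\dd x\,\dd y,\qquad D(x,y)=U_{n-2}^{\nu}(x)U_{n-2}^{\nu}(y)-U^*(x)U^*(y)\ge0.
\]
The composition constants give $C_q^{-1}=\frac{q(n-2-q)(q+2)(n-4-q)}{n^2(n-2)^2\omega_n^2}$, so $C_q^{-1}/q\to\frac{2(n-4)}{n^2(n-2)\omega_n^2}>0$; this is exactly where $n\ge5$ is needed. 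Divide by $q$. Use Lemma~\ref{lem:riesz-log-limit} on the left and Fatou's lemma on the pointwise nonnegative integrand on the right. This gives $I_0(\nu)-I_0(\mu)\ge\frac{2(n-4)}{n^2(n-2)\omega_n^2}\iint D(x,y)|x-y|^{-4}\,\dd x\,\dd y\ge0$.

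Under equality, $V_0(K)\le I_0(\mu)\le I_0(\nu)=\log c_n=V_0(K)$, so the left side vanishes and $D=0$ a.e. Since $D(x,y)\ge\bigl(U_{n-2}^{\nu}(x)-U^*(x)\bigr)U_{n-2}^{\nu}(y)$ with $U_{n-2}^{\nu}>0$, this forces $U^*=U_{n-2}^{\nu}$ a.e. From there, your use of Lemma~\ref{lem:distribution}, \eqref{eq:log-shell-strict} and Lemma~\ref{lem:variation-polar} finishes the argument as in the paper.

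Separately, your $n=4$ expansion has the wrong sign. Uniformly for $y,z\in K$ the inner integral equals $2\pi^2\bigl(\tfrac12+\log R-\log|y-z|\bigr)+O(R^{-1})$, and $I_0(\mu)=-\iint\log|y-z|\,\dd\mu(y)\,\dd\mu(z)$. Hence $\int_{B_o(R)}U^2=2\pi^2\log R+\pi^2+2\pi^2I_0(\mu)+O(R^{-1})$, so the coefficient of $I_0(\mu)$ is $+a$, not $-a$. Consider the chain $\int_{B_o(R)}U^2\le\int_{B_o(R)}(U^*)^2+O(R^{-1})\le\int_{B_o(R)}(U_2^{\nu})^2+O(R^{-1})$. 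With your sign it would give $I_0(\mu)\ge I_0(\nu)$. With the correct sign it gives $I_0(\mu)\le I_0(\nu)$, consistent with \eqref{eq:four-b} since $\pi^2+2\pi^2I_0(\nu)=\pi^2/2$.

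There are also two minor omissions:
\begin{itemize}
\item In the ``if'' direction you need $\Cap_{n-2}(K\setminus B)=0$ before applying Lemma~\ref{lem:capacity-polar-removal} to the Newtonian capacity. This follows from $\Cap_0(K\setminus B)=0$ via \eqref{eq:log-newton-polar}.
\item The equality case $\Cap_{n-2}(K)=\Cap_0(K)=0$, excluded by your normalization, is covered by a radius-zero ball.
\end{itemize}
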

	
	\begin{proof}
		We first reduce the proof of inequality \eqref{eq:newton-log4.1} to the case where $\Cap_{n-2}(K)>0$ and $\Cap_{0}(K)>0$.
		Indeed, if $\Cap_{n-2}(K)=0$, then \eqref{eq:newton-log4.1} is immediately obtained.
		Assume $\Cap_{n-2}(K)>0$. Then there exists  $\mu\in\mathcal P(K)$ such that $I_{n-2}(\mu)<\infty$. 
		By Lemma \ref{lem:riesz-log-limit}, we obtain $I_0(\mu)<\infty$, and therefore $\Cap_{0}(K)>0$.
		Equivalently,
		\begin{equation}\label{eq:log-newton-polar}
			\Cap_0(K)=0\quad\Longrightarrow\quad\Cap_{n-2}(K)=0.
		\end{equation}
		Thus, if $\Cap_{0}(K)=0$ or $\Cap_{n-2}(K) = 0$, then \eqref{eq:newton-log4.1} is immediately obtained. 
		
		In the following, w.l.o.g., assume  $\Cap_{0}(K)>0$ and  $\Cap_{n-2}(K)=1$.

		Let $\mu_{n-2}$ be the $(n-2)$-equilibrium measure of $K$, and write
		$U=U_{n-2}^{\mu_{n-2}}$. From 
		$\Cap_{n-2}(K)=1$, together with \eqref{Lem 2.1 cor}, we have $I_{n-2}(\mu_{n-2})=1,$ and $0<U\leq1.$
		
		\smallskip
		\textbf{Step 1.} Prove inequality \eqref{eq:newton-log4.1}. It suffices to prove that $I_0(\mu_{n-2})\leq I_0(\nu)$. If so, by  $V_0(K)\leq I_0(\mu_{n-2})$ and $I_0(\nu)=\log c_n$, it follows that
		\begin{equation}\label{eq:newton-log-energy-chain}
			\Cap_0(K)=e^{-V_0(K)}\geq e^{-I_0(\mu_{n-2}) }\geq e^{-I_0(\nu) }    =\frac{1}{c_n} 
			=\frac{\Cap_{n-2}(K)}{c_n}.
		\end{equation}
		
		\textbf{Case 1.} Let $n\geq5$.
		For $0<p<n-4$, from Lemma \ref{lem:riesz-log-limit}, and applying \eqref{lem:riesz-composition}
		with the exponent pairs $(n-2,p+2)$ and $(n-2,p+4)$ twice, we have
		\[
		\begin{aligned}
		\infty >	I_p(\mu_{n-2})
			&=\frac{p(n-2-p)}{n(n-2)\omega_n}
			\int_{\R^n}\int_{\R^n}
			\frac{U(x)}{|x-y|^{p+2}}\,\dd x\dd\mu_{n-2}(y) \\
			&=A_{n,p}\int_{\R^n}\int_{\R^n}
			\frac{U(x)U(y)}{|x-y|^{p+4}}\,\dd x\dd y,
		\end{aligned}
		\]
		where $A_{n,p}
		=\frac{p(n-2-p)(p+2)(n-4-p)}{n^2(n-2)^2\omega_n^2}>0.$ 
		Similarly, we have $$\infty >I_p(\nu)
		=A_{n,p}\int_{\R^n}\int_{\R^n}
		\frac{U_{n-2}^{\nu}(x)U_{n-2}^{\nu}(y)}{|x-y|^{p+4}}\,\dd x\dd y.$$
		
		By the Riesz rearrangement inequality  with $f=h=U$ and
		$g(x)=|x|^{-(p+4)}$, we have
		\[
		\int_{\R^n}\int_{\R^n}\frac{U(x)U(y)}{|x-y|^{p+4}}\,\dd x\dd y
		\leq
		\int_{\R^n}\int_{\R^n}\frac{U^*(x)U^*(y)}{|x-y|^{p+4}}\,\dd x\dd y.
		\]
		Therefore,
		\begin{equation*}
			\begin{split}
				I_p(\nu)-I_p(\mu_{n-2})
				\geq A_{n,p}\int_{\R^n}\int_{\R^n}
				\frac{U_{n-2}^{\nu}(x)U_{n-2}^{\nu}(y)-U^*(x)U^*(y)}{|x-y|^{p+4}}\,\dd x\dd y.
			\end{split}
		\end{equation*}
		From Lemma~\ref{lem:riesz-log-limit}, the Fatou Lemma together with Lemma \ref{lem:distribution} and $\lim\limits_{p\to 0^{+}}\frac{A_{n,p}}{p}
		=\frac{2(n-4)}{n^2(n-2)\omega_n^2}$, it follows that
		\begin{align*}
				&\quad\ \  I_0(\nu)-I_0(\mu_{n-2})
			=\lim_{p\to0^+}
			\frac{I_p(\nu)-I_p(\mu_{n-2})}{p}\\
			&\geq
			\int_{\R^n}\int_{\R^n}
			\liminf_{p\to0^+}
			\big(
			\frac{A_{n,p}}{p}
			\frac{
				U_{n-2}^{\nu}(x)U_{n-2}^{\nu}(y)-U^*(x)U^*(y)
			}{|x-y|^{p+4}}
			\big)\,\dd x\,\dd y\\
			&=
			\frac{2(n-4)}{n^2(n-2)\omega_n^2}
			\int_{\R^n}\int_{\R^n}
			\frac{
				U_{n-2}^{\nu}(x)U_{n-2}^{\nu}(y)-U^*(x)U^*(y)
			}{|x-y|^4}\,\dd x\,\dd y\geq0.
		\end{align*}
	That is,
		\begin{equation}\label{eq:defect-high}
			I_0(\nu)-I_0(\mu_{n-2})
			\geq0.
		\end{equation}

		\smallskip
		\textbf{Case 2.} Let $n=4$.
		Fix $r>1$ such that $K\subseteq B_o(r)$.
		
		First, as $R\to +\infty$, we have
		\begin{align}
			&\ \int_{B_o(R)}U^2\,\dd x
			=\int_{\R^4}\int_{\R^4}
			\int_{B_o(R)}\frac{\dd x}{|x-y|^2|x-z|^2}
			\dd\mu_2(y)\dd\mu_2(z)    \label{eq:four-u} \\
			=&\ \int_{\R^4}\int_{\R^4}
			\int_{B_{-y}(R)}\frac{\dd x}{|x|^2|x-(z-y)|^2}
			\dd\mu_2(y)\dd\mu_2(z)   \notag\\
			=&\ \int_{\R^4}\int_{\R^4}
			\int_{B_o(R)}\frac{\dd x}{|x|^2|x-(z-y)|^2}
			\dd\mu_2(y)\dd\mu_2(z)+O(R^{-1}) \notag\\
			=&\ \int_{\R^4}\int_{\R^4}\int_0^R\int_{\mathbb S^3}
			\frac{\rho^3\,\dd\mathcal H^3(\theta)}{\rho^2|\rho\theta-(z-y)|^2} \dd\rho
			\dd\mu_2(y)\dd\mu_2(z)+O(R^{-1}) \notag\\
			=&\ 2\pi^2\int_{\R^4}\int_{\R^4}
			\int_0^R\frac{\rho}{\max\{\rho,|y-z|\}^2}\,\dd\rho
			\dd\mu_2(y)\dd\mu_2(z)+O(R^{-1}) \notag\\
			=&\ 2\pi^2\int_{\R^4}\int_{\R^4}\big(
			\int_0^{|y-z|}\frac{\rho}{|y-z|^2}\,\dd\rho+\int_{|y-z|}^R\frac{\dd\rho}{\rho}
			\big)\dd\mu_2(y)\dd\mu_2(z)+O(R^{-1}) \notag\\
			=&\ 2\pi^2\int_{\R^4}\int_{\R^4}\big(
			\frac12+\log R-\log|y-z|
			\big)\dd\mu_2(y)\dd\mu_2(z)+O(R^{-1}) \notag\\
			=&\ \pi^2+2\pi^2\log R+2\pi^2I_0(\mu_2)+O(R^{-1}). \notag
		\end{align}
		Here the first equality follows from the Tonelli theorem;
		The third equality follows from the facts that for sufficiently large $R$, the symmetric difference $B_{-y}(R)\mathbin{\triangle}B_o(R)
		\subseteq B_o(R+r)\setminus B_o(R-r)$, the volume $|B_o(R+r)\setminus B_o(R-r)| = O(R^3)$ and  uniformly for \(y,z\in K\)
		\begin{align*}
			\frac{1}{|x|^2|x-(z-y)|^2} = O(R^{-4})\quad \text{ for }\   x\in B_o(R+r)\setminus B_o(R-r);
		\end{align*}
		The fifth equality follows from
		\eqref{eq:spherical-shell-potential}, which gives
		\[
		\int_{\mathbb S^3}
		\frac{\dd\mathcal H^3(\theta)}{|\rho\theta-(z-y)|^2}
		=\frac{2\pi^2}{\rho^2}U_2^{\nu}\!\big(\frac{z-y}{\rho}\big)
		=\frac{2\pi^2}{\max\{\rho,|y-z|\}^2}.
		\]
		
		Second, by \eqref{eq:spherical-shell-potential}, we have
		\begin{equation}\label{eq:four-b}
			\begin{aligned}
				\int_{B_o(R)}\big(U_{n-2}^{\nu}\big)^2\,\dd x
				=2\pi^2\big(\int_0^1 \rho^3\,\dd\rho+\int_1^R\frac{\dd\rho}{\rho}\big)
				=2\pi^2\log R+\frac{\pi^2}{2},\qquad R>1.
			\end{aligned}
		\end{equation}
		
		Third, write $E_t=\{x\in\R^4:U(x)>t\}$. Then
		\begin{align*}
			\ &\ \int_{B_o(R)}((U^*)^2-U^2)\,\dd x
			=\int_0^\infty2t\big(
			\min\{\omega_4R^4,|E_t|\}-|B_o(R)\cap E_t|\big)\,\dd t   \\
			=&\ \int_{(R+r)^{-2}}^{(R-r)^{-2}}2t\big(
			\min\{\omega_4R^4,|E_t|\}-|B_o(R)\cap E_t|\big)\,\dd t \notag\\
			\leq&\ \omega_4R^4\int_{(R+r)^{-2}}^{(R-r)^{-2}}2t\,\dd t
			=\omega_4R^4\big((R-r)^{-4}-(R+r)^{-4}\big)
			=O(R^{-1}). \notag
		\end{align*}
		Here the first equality follows from  that $|B_o(R)\cap\{U^*>t\}|=\min\{\omega_4R^4,|E_t|\}$ and the coarea formula;
		Since $U(x)=\int_{\R^4}|x-y|^{-2}\,\dd\mu_2(y)$, the triangle inequality gives that
		\[
		U\geq(R+r)^{-2}\quad\text{on }B_o(R),
		\quad\text{and}\quad U\leq(R-r)^{-2}\quad\text{on }\R^4\setminus B_o(R).
		\]
		Hence $B_o(R)\subseteq E_t$ for $0<t<(R+r)^{-2}$, whereas
		$E_t\subseteq B_o(R)$ for $t>(R-r)^{-2}$. 
		This gives the second equality.
		
		Moreover, from the first equality, we conclude that
		\begin{align*}
			0\leq \int_{B_o(R)}((U^*)^2-U^2)\,\dd x \leq O(R^{-1}).
		\end{align*}
		So, combining Lemma \ref{lem:distribution},  \eqref{eq:four-u}, \eqref{eq:four-b} and 
		$I_0(\nu)=\log c_4=-\frac{1}{4}$, we have
		\begin{align}
			0\leq & \lim_{R\to\infty}\int_{B_o(R)}\big(\big(U_{n-2}^{\nu}\big)^2-(U^*)^2\big)\,\dd x  \label{case1 step2eq} \\
			= &\lim_{R\to\infty}\int_{B_o(R)}\big( (U_{n-2}^{\nu})^2-U^2+ U^2 -(U^*)^2\big)\,\dd x \notag\\ 
			= &-\frac{\pi^2}{2}  - 2\pi^2I_0(\mu_2)
			=2\pi^2\big(I_0(\nu)-I_0(\mu_2)\big). \notag
		\end{align}
		Thus, $I_0(\mu_2) \leq I_0(\nu)$.

		\textbf{Step 2.} Characterize the equality conditions.
		
		Assume that there exists some closed ball $B$ such that
		$\partial B\subseteq K$ and $\Cap_0(K\setminus B)=0$. By \eqref{eq:log-newton-polar}, we have $\Cap_{n-2}(K\setminus B)=0$.
		So, we have $\Cap_0(K)=\Cap_0(K\cap B)$ and $\Cap_{n-2}(K)=\Cap_{n-2}(K\cap B)$ by
		Lemma~\ref{lem:capacity-polar-removal}.
		
		Since $\partial B\subseteq K\cap B\subseteq B$, it follows that
		\[	\Cap_{n-2}( B)=\Cap_{n-2}(\partial B)\le \Cap_{n-2}(K\cap B)\le\Cap_{n-2}(B),\]
		\[\Cap_{0}( B)=\Cap_{0}(\partial B)\le \Cap_{0}(K\cap B)\le\Cap_{0}(B).\]
		Therefore, 
		\begin{align*}
			&\Cap_{n-2}(K)=\Cap_{n-2}(B)
			=c_n\Cap_{0}(B)=c_n\Cap_0(K).
		\end{align*}
		
		Conversely, 
		if $\Cap_{n-2}(K)=c_n\Cap_0(K)=0$, then for any $y\in K$, the
		ball $B=B_y(0)$ satisfies $\partial B\subseteq K$ and
		$\Cap_0(K\setminus B)=0$.
		
		Assume that $\Cap_{n-2}(K)=c_n\Cap_0(K)>0$. 
		By \eqref{eq:newton-log-energy-chain}, we have
		\begin{align*}
			I_0(\mu_{n-2})=I_0(\nu)=V_0(K).
		\end{align*}
		So, $\mu_{n-2}$ is also
		the $0$-equilibrium measure of $K$. By Lemma~\ref{lem:variation-polar}, we have
		\begin{equation}\label{final}
			\Cap_0(\{x\in K:U_0^{\mu_{n-2}}(x)< V_0(K)\})=0.
		\end{equation}

		If $n\geq5$, from \eqref{eq:defect-high} together with that
		\begin{align*}
			U_{n-2}^{\nu}(x)U_{n-2}^{\nu}(y)-U^*(x)U^*(y) \geq 
			\big(U_{n-2}^{\nu}(x)-U^*(x)\big)U_{n-2}^{\nu}(y)\geq0,
		\end{align*}
		it follows that $U^*=U_{n-2}^{\nu}$ Lebesgue-a.e. on $\R^n$.
		If $n=4$, from \eqref{case1 step2eq} together with Lemma~\ref{lem:distribution}, we have $U^*=U_2^{\nu}$ Lebesgue-a.e. on $\R^4$.
		
		Thus, by the equality condition of Lemma~\ref{lem:distribution},
		$\mu_{n-2}$ is the normalized surface measure on $\partial B$
		for some ball $B$ of radius $1$. Since $\mu_{n-2}\in\mathcal P(K)$, we have $\partial B=\supp\mu_{n-2}\subseteq K.$
		Moreover, by \eqref{eq:log-shell-strict} and the translation invariance, we have
		\[
		U_0^{\mu_{n-2}}(x)<\log c_n=V_0(K)
		\qquad\text{for }x\notin B.
		\]
		Combining this with \eqref{final}, we have $\Cap_0(K\setminus B)=0$.
		This completes the proof.
	\end{proof}
	\vskip10pt
	\noindent\textbf{AI Use Statement.}
	Through an iterative dialogue guided by our prompts,
	GPT-6.0 helped us identify the auxiliary function $F$
	defined in \eqref{defofF}, which we use to overcome the difficulty raised by
	the nonharmonicity of Riesz $p$-potentials.
	
	The authors take full responsibility for the content of this paper.

\end{document}